\newcommand\sF{{\mathcal F}}
\newcommand\sL{{\mathcal L}}
\newcommand\sB{{\mathcal B}}
\newcommand\sH{{\mathcal H}}
\newcommand\LL{{\mathbb L}}
\newcommand\om{\omega}
\newcommand\Ga{\Gamma}
\newcommand\e{\epsilon}
\newcommand{\CC}{\ensuremath{\mathbb{C}}}
\newcommand{\ZZ}{\ensuremath{\mathbb{Z}}}
\newcommand{\QQ}{\ensuremath{\mathbb{Q}}}
\newcommand{\sS}{\ensuremath{\mathcal{S}}}
\newcommand{\NN}{\ensuremath{\mathbb{N}}}
\newcommand{\hol}{\ensuremath{\mathcal{O}}}
\newcommand{\HH}{\ensuremath{\mathbb{H}}}
\newcommand{\PP}{\ensuremath{\mathbb{P}}}
\newcommand{\HHH}{\ensuremath{\mathcal{H}}}
\newcommand{\ra}{\ensuremath{\rightarrow}}
\def\eea{\end{eqnarray*}}
\def\bea{\begin{eqnarray*}}
\newcommand\dual{\mathrel{\raise3pt\hbox{$\underline{\mathrm{\thinspace d
\thinspace}}$}}}
\newcommand\qe{\ifhmode\unskip\nobreak\fi\quad $\Box$}       
\def\BOX{\hfill\lower.5\baselineskip\hbox{$\Box$}}
\newtheorem{theo}[equation]{Theorem}
\newtheorem{remarkk}[equation]{Remark}
\newenvironment{rem}{\begin{remarkk}\rm}{\end{remarkk}}
\newtheorem{defin}[equation]{Definition}
\newenvironment{definition}{\begin{defin}\rm}{\end{defin}}
\newtheorem{prop}[equation]{Proposition}
\newtheorem{cor}[equation]{Corollary}
\newtheorem{lemma}[equation]{Lemma}
\newtheorem{example}[equation]{Example}
\newtheorem{question}[equation]{Question}
\newcommand{\sR}{\ensuremath{\mathcal{R}}}
\newcommand{\GL}{{\rm GL}}
\newcommand{\Gal}{{\rm Gal}}
\newcommand{\im}{{\rm Im}}
\newcommand{\al}{{\alpha}}
\begin{document}

\title[VBAC coming from   VHS ]{ Vector bundles on curves  coming from Variation of Hodge Structures }
\author{ Fabrizio Catanese - Michael  Dettweiler}
\address {Lehrstuhl Mathematik VIII -Lehrstuhl Mathematik IV\\
Mathematisches Institut der Universit\"at Bayreuth\\
NW II,  Universit\"atsstr. 30\\
95447 Bayreuth}
\email{Fabrizio.Catanese@uni-bayreuth.de}
\email{Michael.Dettweiler@uni-bayreuth.de}

\thanks{AMS Classification: 14D07-14C30-32G20-33C60.\\
The present work took place in the realm of the ERC advanced grant TADMICAMT,
 the second named author  was supported by the DFG grant DE 1442/4-1}

\date{\today}

\maketitle

\begin{abstract}
  Fujita's  second theorem
 for K\"ahler fibre spaces over a curve asserts  that the direct image $V$ of the relative
dualizing sheaf splits as the direct sum $ V = A \oplus Q$, where $A$ is ample and $Q$ is unitary flat.
We focus on our negative answer (\cite{cd}) to a question by Fujita: is $V$  semiample?

 We  give here an  infinite series of  counterexamples using hypergeometric integrals and
  we give a simple argument to show that  the monodromy representation is infinite.
Our counterexamples are surfaces of general type with positive index,  explicitly given 
  as abelian coverings with group $(\ZZ/n)^2$ of a Del Pezzo surface of degree 5 (branched on a union of lines
forming a bianticanonical divisor), 
and endowed with a   semistable fibration with only $3$ singular fibres.

The simplest such surfaces are the three ball quotients  considered in \cite{bc},
fibred over a curve of genus $2$, and with fibres of genus $4$.

These examples are a larger class than the ones corresponding to
Shimura curves in the moduli space of Abelian varieties.

\end{abstract}

\tableofcontents

\section*{Introduction}
In this paper we first begin  recalling  previous results (\cite{fuj1}, \cite{fuj2}, \cite{cd}, \cite{cd2}) concerning  Fujita's first and second theorem
 for K\"ahler fibre spaces over a curve, asserting  that the direct image $V$ of the relative
dualizing sheaf splits as the direct sum $ V = A \oplus Q$, where $A$ is ample and $Q$ is unitary flat. Then
we focus on our negative answer (\cite{cd}, \cite{cd2}) to a question posed by Fujita 30 years ago: $V$ does not need to be semiample.

 We  show here that the   two examples of  (\cite{cd}) fits into an  infinite series of  counterexamples, again based on the use of hypergeometric integrals \`a la 
 Deligne-Mostow, for each  positive number $n$   and  each  way to write $n$ as a sum of four positive  integers, and yielding a family of cyclic coverings of the line
 parametrized by $\PP^1$.
 
Following Beukers and Heckman we  can show that   the monodromy group of $Q$  is infinite
without  resorting to the classification by H.A.  Schwarz.

Under some mild restrictions on $n$ and the four integers (for $n$ the restriction  boils down to the fact that $n$ should be  coprime to $6$), we give a very simple explicit description 
of fibred surfaces $f : S \ra B$ which are obtained from the above family via  a cyclic $\ZZ/n$-base change  $B \ra \PP^1$, and 
which have the following remarkable properties:

\begin{enumerate}
\item
The Albanese map $\al : S \ra Alb (S)$ has as image a curve of genus $b \geq 2$, and coincides with the fibration  $f : S \ra B$;
\item
all the fibres of $\al$ are smooth, except three singular fibres which are constituted of two smooth curves of genus $b$
meeting transversally in one point
\item
the surfaces $S$ have all positive index, indeed $K^2_S > 2.5 e(S)$
\item
 the direct image $V = f_* (\omega)$ of the relative
dualizing sheaf splits as the direct sum $ V = A \oplus Q$, where $A$ is ample and $Q$ is unitary flat, 
and  $Q$ corresponds to an infinite monodromy representation of $\pi_1(B)$: hence $V$ is not semiample (since, by the results of  \cite{cd}, a unitary flat
bundle  is semi-ample if and only if the monodromy representation is finite).
\end{enumerate}

In the previous two examples (\cite{cd}, \cite{cd2}) we had $n=7$, but we used another method to produce a  base change yielding a semistable 
fibration; as a consequence the degree of the base change that   we needed was  much larger  than $7$ ($42$ in the easier case), 
and  the semistable fibrations were not described with  full details. 
The description we give here was  motivated by a question by Fujino, who asked whether we could give a completely  explicit 
example of a semistable fibration satisfying property (4).

To underline the simplicity of the present geometric construction, let us observe that the  simplest  surfaces in our series
correspond to writing 5 = 2 + 1 + 1 + 1, and are therefore
 surfaces $S$ fibred over a curve of genus $2$ , and with fibres of genus $4$.
It turns out that these surfaces are among  the  ball quotients which were previously considered in \cite{bc}.

The following is our main result.

  \begin{theo}\label{surfaces}
  There exists an infinite series of surfaces with ample canonical bundle,  whose Albanese map is a  fibration $ f : S \ra B$ onto a curve $B$ of genus $b= \frac{1}{2}(n-1)$, and with fibres of genus $g = 2b= n-1$,  where $n$ is any integer relatively prime with $6$.
  
  These Albanese  fibrations  yield negative answers to Fujita's question about the semiampleness of $V : = f_* \om_{S|B}$, since
  here $V : = f_* \om_{S|B}$
splits as a direct sum 
$ V = A  \oplus Q$, where $A$ is an ample    vector bundle, and $Q$ is a  unitary  flat  bundle with
infinite monodromy group.

The fibration $f$ is semistable: indeed all the fibres are smooth, with the exception of three fibres which are the union of two smooth curves of genus $b$
which meet transversally in one point.

For $n=5$ we get   three surfaces which are rigid, and are quotient of the unit ball in $\CC^2$ by a torsion free cocompact lattice $\Ga$.
The rank of $A$, respectively $Q$, is in this case equal to $2$.
 \end{theo}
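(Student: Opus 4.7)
The plan is to build the surfaces $S$ in three stages: first produce a family of cyclic $n$-coverings of $\PP^1$ over a punctured base, identify the Hodge-theoretic splitting $V=A\oplus Q$ via the character decomposition under the cyclic automorphism group, and finally perform a base change to obtain a semistable model. To set things up, fix a decomposition $n=a_1+a_2+a_3+a_4$ with the $a_i$ coprime to $n$, and consider the family $\sC\to U\subset \PP^1$ whose fibre over $t$ is the smooth projective model of the affine curve $y^n=(x-t_1)^{a_1}(x-t_2)^{a_2}(x-t_3)^{a_3}(x-t)^{a_4}$ with $t_1,t_2,t_3$ fixed. The total space carries an action of $\mu_n$ and the direct image $f_*\om_{\sC/U}$ splits into character eigenbundles; a standard Chevalley--Weil/Hurwitz character computation expresses the rank and the Hodge type of each summand in terms of the residues $\{ja_i/n\}$. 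The eigenlines with nontrivial Hodge $(1,0)$-$(0,1)$ decomposition assemble into the ample summand $A$ (by Fujita I and positivity of the first Hodge bundle), while the eigenlines where the whole cohomology is of pure type $(1,0)$ or $(0,1)$ provide the unitary flat summand $Q$.

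Next, I would analyse the monodromy of $Q$. The eigenlines in question are exactly those whose periods are classical hypergeometric integrals à la Deligne--Mostow, and the rank-two hypergeometric local system has explicit local monodromies given by diagonal matrices with entries that are roots of unity of order $n$, together with a reflection at the moving singularity. By Beukers--Heckman, such a rigid hypergeometric monodromy is either finite or Zariski-dense in the relevant classical group; ruling out the finite case reduces (after listing the exponents) to an arithmetic inequality on the fractional parts $\{ka_i/n\}$ which fails whenever $n$ is coprime to $6$ and $n\geq 5$. This gives the claimed infinite monodromy and, by the quoted result from \cite{cd}, the non-semiampleness of $V$.

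Then I would perform the base change. The three punctures correspond to collisions of two branch points; at each puncture the local monodromy on the family is a pseudo-reflection of order $n$, and a single cyclic $n$-to-$1$ cover $B\to\PP^1$ totally ramified at the three puncture points kills these monodromies simultaneously. Riemann--Hurwitz gives $g(B)=\frac{1}{2}(n-1)=b$. The pullback family $f:S\to B$ is then semistable, and the explicit local model $y^n=(x-t)(x-s)\cdot(\text{unit})$ at a collision, after the base change $t-s=u^n$, is normalised to a union of two smooth branches of genus $b$ meeting transversally: this is the claimed description of the three singular fibres. A direct Hurwitz computation yields the invariants $g=n-1=2b$, and the slope inequality $K_S^2>2.5\,e(S)$ follows from combining the Noether formula with the fact that $Q$ is flat of rank $b$ and $\deg A$ is computed from the Hodge bundle of a weight-one VHS (Arakelov-type bound). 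Ampleness of $K_S$ follows since $f$ is semistable with fibres of genus $\geq 2$ and the only possible $(-2)$-curves would have to be components of fibres, but both components of each singular fibre have positive self-intersection with $K_S$.

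Finally, for the Albanese statement, I would observe that $\dim H^0(S,\Om^1_S)=b+q(B)=b$ (since the unitary flat $Q$ contributes exactly $\mathrm{rk}\,Q$ pull-back one-forms, but these are already the ones coming from $B$ after the base change kills the relevant eigencharacters), so the Albanese factors through $f$; combined with the fact that $B$ has genus $b$ and the fibres of $f$ are connected, the factorisation is an equality of fibrations. For the case $n=5$, $b=2$, $g=4$, the invariants $(K^2,e)=(45,15)$ of $S$ saturate the Miyaoka--Yau inequality $K_S^2=3e(S)$; by Yau's theorem the universal cover is then the complex two-ball, and matching the three possible decompositions $5=2+1+1+1$ (up to reordering) with the three explicit lattices of \cite{bc} identifies these surfaces. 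Rigidity is immediate from being a ball quotient with $q=b=2$. The main obstacle throughout is the monodromy analysis: verifying that the Beukers--Heckman criterion fails in all the cases covered by the theorem, and tracking the rank of $Q$ (in particular that it is nonzero and produces the Albanese dimension) in terms of the combinatorics of the exponents $a_i$.
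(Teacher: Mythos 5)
Your overall strategy is the same as the paper's (cyclic covers of $\PP^1$ branched at $4$ points, character decomposition of $f_*\om$, a Beukers--Heckman type finiteness criterion, and a single degree-$n$ cyclic base change branched at the three collision points), but the central monodromy step has a genuine gap. The flat summand $Q$ is built from the eigenspaces $\sH_j$ that are entirely of Hodge type $(1,0)$ (e.g.\ $j=-1$, because $m_0+\cdots+m_3=n$), and for exactly those eigenspaces the invariant Hermitian form is \emph{definite}: their monodromy lies in a compact unitary group, so no signature obstruction can be read off from the eigenspace that actually sits inside $Q$. What is needed, and what your sketch never articulates, is the transfer through Galois conjugation: finiteness of the monodromy is equivalent to definiteness of \emph{all} Galois conjugates of the invariant form (Lemma \ref{lemmafinite}, which also requires the irreducibility established in Prop.\ \ref{propapp}), and one must exhibit a unit $j$ for which $\sum_i[jm_i]=2n$, i.e.\ an indefinite conjugate, and then carry the conclusion back to $\sH_{-1}\subset Q$ because $-1$ and $j$ are conjugate characters. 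In the paper this is Cor.\ \ref{corinfinite} together with the last step of the proof of Theorem \ref{surfaces}, and it uses the hypothesis that $m_0+m_3$ (not merely each $m_i$) is a unit mod $n$. Your ``arithmetic inequality on the fractional parts $\{ka_i/n\}$ which fails whenever $\gcd(n,6)=1$'' is not substantiated, and it misattributes the role of $\gcd(n,6)=1$: in the paper that hypothesis guarantees the \emph{existence} of exponents $m_j,n_i$ satisfying all the unit conditions (including the pair sums $m_i+m_3$); with only ``$a_i$ coprime to $n$'' neither the signature computation nor the geometry below is secured. (The paper also gives a second, more elementary route: a slope computation shows $\deg V\neq 0$, so some $V_j$ is an ample line summand, forcing $\sH_j$ to be irreducible with infinite monodromy, and then the same Galois conjugation; you have nothing playing this role either.)

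Two further steps are asserted rather than proved, and one of them is wrong as stated. First, the claim that the local model $y^n=(x-t)(x-s)\cdot(\mathrm{unit})$ normalises, after $t-s=u^n$, to two smooth genus-$b$ curves meeting transversally in \emph{one} point is not automatic: whether each component is a totally ramified $3$-point cyclic cover (hence of genus $b$) and whether the two components meet in one point upstairs rather than in several depends on the inertia data; in the paper this is Prop.\ \ref{p1} and \ref{p2}, which use that the $n_i$ and the sums $m_i+m_3$ are units so that the two inertia subgroups at the node generate $(\ZZ/n)^2$ and the cover is totally branched there --- this is precisely where $\gcd(n,6)=1$ enters. Second, your Albanese argument is incorrect: a unitary flat summand with nontrivial monodromy contributes \emph{no} holomorphic one-forms (it is not ``$\mathrm{rk}\,Q$ pull-back forms''), and $\mathrm{rk}\,Q=b$ is false in general (already for $n=7$ in the standard case $Q$ contains two rank-two flat eigenspaces while $b=3$). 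The correct computation is via Leray and relative duality, $q=b+h^0(V^{\vee})$, with $h^0(V^{\vee})=0$ because $A$ is ample and $Q$ has no trivial summand (irreducibility), as in Prop.\ \ref{p2}(4). Likewise the slope and ball-quotient statements require the actual invariants $e(S)=3+2(n-2)(n-3)$ and $K_S^2=5(n-2)^2$ (Prop.\ \ref{p3}, via Zeuthen--Segre and $K_S\equiv$ pullback of $-\frac{n-2}{n}K_Z$ on the degree-$5$ del Pezzo); the ``Arakelov-type bound'' you invoke rests on the false rank count and does not deliver them, even though the values $(45,15)$ you quote for $n=5$ happen to be correct.
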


Finally we end surveying quite briefly  relations with existing literature concerning
Shimura curves in the moduli space of Abelian varieties: this is work of several people, but especially the work of Moonen (\cite{moonen})
is related to our easiest examples.

\section{Fujita's theorems and questions on Vector Bundles on curves arising from Variation of Hodge Structures}

An important progress in classification theory was stimulated by  a theorem of Fujita, who showed (\cite{fuj1}) that if $X$ is a compact K\"ahler
manifold and $ f : X \ra B$ is a fibration onto a projective curve $B$ (i.e., $f$ has connected fibres), then the direct image sheaf
$$  V : = f_* \om_{X|B} = f_* ( \hol_X (K_X - f^* K_B))$$ is a  nef   vector bundle on $B$, where `nef'  means that each quotient bundle $Q$
of $V$ has degree $\deg (Q) \geq 0$; sometimes the word `nef' is  replaced by the word  `numerically semipositive'.

In the note \cite{fuj2} Fujita announced the following quite  stronger result:

\begin{theo}{\bf (Fujita, \cite{fuj2})}\label{fuj2}

Let $f : X \ra B $ be  a fibration of a compact K\"ahler manifold $X$ over a projective curve $B$, and consider 
the direct image  sheaf $$ V : = f_* \om_{X|B} = f_* ( \hol_X (K_X - f^* K_B)).$$
Then $V$ splits as a direct sum  $ V  = A \oplus Q$, where $A$ is an ample vector bundle and $Q$ is a unitary  flat bundle.
\end{theo}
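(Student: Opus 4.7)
The plan is to use Hodge theory to split $V = f_*\om_{X|B}$ according to the structure of the polarized variation of Hodge structure (VHS) on $R^n f_*\CC$, where $n$ is the relative dimension of $f$. Restrict first to the Zariski open locus $B^\circ \subset B$ over which $f$ is smooth. On $B^\circ$, the local system $\L := R^n f_*\CC|_{B^\circ}$ underlies a polarized VHS, and $V|_{B^\circ}$ is canonically identified with the top Hodge piece $F^n \L$.

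By Deligne's theorem on the semisimplicity of the monodromy of a polarized VHS, $\L$ decomposes as a direct sum of irreducible sub-VHSs. Let $Q^\circ \subset V|_{B^\circ}$ denote the sum of those irreducible sub-VHSs that are of pure Hodge type $(n,0)$; their monodromy automatically preserves the Hodge polarization, so they are unitary flat, and they lie entirely inside $F^n = V$. Let $A^\circ$ denote the Hodge-orthogonal complement, i.e.\ the contribution to $V$ from the mixed-type irreducible sub-VHSs of $\L$. Both summands are holomorphic subbundles (each arising from a sub-VHS), so $V|_{B^\circ} = Q^\circ \oplus A^\circ$ is a holomorphic direct sum. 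Unitarity of $Q^\circ$ forces its local monodromies at points of $B \setminus B^\circ$ to lie in a compact group, hence Deligne's canonical extension realises $Q^\circ$ as a globally defined unitary flat subbundle $Q \subset V$ on all of $B$, with a holomorphic complement $A$ giving $V = Q \oplus A$.

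To prove $A$ is ample, use the curvature of the Hodge metric $h$. Since the splitting $V|_{B^\circ} = Q^\circ \oplus A^\circ$ is $h$-orthogonal, the Chern curvature of $h$ splits block-diagonally; on $Q^\circ$ it vanishes (unitary flat), while on $A^\circ$ it equals $\sigma^* \sigma|_{A^\circ}$, where $\sigma : V \ra (F^{n-1}/F^n) \otimes \Omega^1_B$ is the graded piece of the Gauss--Manin connection. By construction $\sigma$ is injective on $A^\circ$: a nonzero $v \in A^\circ$ with $\sigma(v) = 0$ would, by Griffiths transversality, generate a sub-local-system contained in $F^n$, hence a pure-type-$(n,0)$ sub-VHS of the mixed-type factor, contradicting the way $Q^\circ$ was extracted. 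Consequently $A^\circ$ carries a strictly Griffiths-positive metric, and on a projective curve Griffiths positivity implies ampleness. Combined with Fujita's first theorem (which already yields nefness of $V$, hence of $A$), this gives ampleness of $A$ on $B$.

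The main obstacle I anticipate is the analytic control at the boundary $B \setminus B^\circ$: one must check that the splitting constructed over $B^\circ$ extends as an \emph{algebraic} holomorphic direct-sum decomposition on the Deligne canonical extension, and that enough of the strict Hodge-metric positivity survives across the degenerate fibres to guarantee global ampleness of $A$. Both ingredients rely on Schmid's nilpotent orbit and $\SL_2$-orbit theorems to control the asymptotic behaviour of $h$ near points of unipotent local monodromy, together with the strictness of morphisms of polarized Hodge structures to identify the extended pieces.
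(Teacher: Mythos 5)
Your splitting of $V|_{B^\circ}$ via Deligne semisimplicity into a pure $(n,0)$ flat part $Q^\circ$ and a complement $A^\circ$ is a reasonable starting point, but the step that is supposed to give ampleness of $A$ fails as stated. You claim that $\sigma$ is injective on $A^\circ$ because a vector $v$ with $\sigma(v)=0$ would ``generate a sub-local-system contained in $F^n$''; this is not so: vanishing of the second fundamental form on one vector at one point (or even on a subbundle at isolated points) does not produce a $\nabla$-flat sub-local system — flatness requires the kernel to be a connection-stable subbundle over an open set. In fact strict Griffiths positivity of the Hodge metric on $A^\circ$ is in general false (the derivative of the period map may drop rank at special points of $B^\circ$), and it is also not what is needed. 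On a curve, ampleness is equivalent, by Hartshorne's criterion (Proposition \ref{ample}), to every quotient bundle having strictly positive degree, and this is how the paper argues: Griffiths' computation gives $\Theta_V=\bar{\sigma}\,^t\sigma\ge 0$, so every quotient of $V$ has degree $\ge 0$; a quotient of degree zero forces the relevant curvature and second fundamental form to vanish identically, hence is unitary flat; splitting off the maximal unitary flat summand $Q$ (orthogonally for the Hodge metric) leaves an $A$ all of whose quotients have positive degree, hence $A$ is ample. Your route through pointwise positivity proves too much and cannot be repaired without reverting to this degree argument.

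The second, and more serious, gap is the boundary, which your final paragraph explicitly defers: that deferral is precisely the content of the theorem (it is the ``positivity of local exponents'' step Fujita never wrote). Two things must be proved, not assumed: (i) the degree of each quotient of $V$ on all of $B$ is still computed by integrating the first Chern form of the singular Hodge metric — this is Kawamata's lemma on metrics with at most logarithmic growth (\cite{kaw1}) combined with Zucker's estimates (\cite{zucker}), resting on Schmid's asymptotics, which furnish a basis of $V$ whose Hodge norms grow at most logarithmically at the punctures; and (ii) the flat unitary part must actually be unitary flat on $B$, i.e.\ the local monodromies at $B\setminus B^\circ$ must act trivially on it, and its canonical extension must be a direct summand of $f_*\om_{X|B}$ rather than merely a subsheaf. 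The paper secures (ii) by first passing to a semistable model via the semistable reduction theorem and Proposition \ref{sstablered} (unipotent plus unitary equals trivial, and the comparison of $V'$ with $u^*V$ controls what happens under the base change); ``the local monodromy lies in a compact group'' by itself neither makes the representation descend to $\pi_1(B)$ nor makes the extension split off inside $f_*\om_{X|B}$. As written, the proposal assumes exactly the hard part of Fujita's second theorem.
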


Fujita sketched the proof, but referred to a forthcoming article concerning the positivity  of the so-called local exponents (this article was never written, see \cite{Barja}).

 Soon afterwards, using Griffihts' results on Variation of Hodge Structures,
since the fibre of $V: = f_* \om_{X|B}$ over a point $b \in B$ such that $X_b : = f^{-1} (b)$ is smooth is  the vector space 
$  V_b   = H^0( X_b, \Omega^{n-1}_{X_b}),$ 
   Kawamata (\cite{kaw0} \cite{kaw1}) improved on Fujita's result, solving a long standing problem
   and proving  the subadditivity of Kodaira dimension for
 such fibrations, $$ Kod (X) \geq Kod(B) + Kod (F),$$ (here $F$ is a general fibre). Kawamata did this by  showing the semipositivity also for 
the direct image of  higher powers of the relative dualizing sheaf 
$$W_m : =  f_* ( \om_{X|B}^{\otimes m}) = f_* ( \hol_X (m(K_X - f^* K_B))).$$

  Kawamata also extended his result to the case where the dimension of the base variety $B$  is $> 1$  in \cite{kaw0},
giving later a simpler proof of semipositivity  in \cite{kaw2}. There has been a lot of literature on the subject ever since,
see the references we cited (see \cite{e-v} for the ampleness of $W_m$ when $m \geq 2$ and when  the fibration is not birationally  isotrivial,
see also  \cite{ff14} and \cite{ffs14}). 
Kawamata introduced  a simple lemma, concerning
the degree of line bundles on  a curve whose metric grows at most  logarithmically around a finite number of singular points, which played a crucial role for the proof.

\medskip

The missing details concerning the proof of the second theorem of Fujita, using Kawamata's lemma and some crucial estimates
given by Zucker (\cite{zucker}) for the growth of the norm of sections of the $L^2$-extension of Hodge bundles, were provided in \cite{cd}, 
where also a negative answer was given to the following question posed by Fujita in 1982 (Problem 5, page 600 of \cite{katata},
Proceedings of the 1982 Taniguchi  Conference).

To understand this question it  is not only  important to have in mind Fujita's second theorem, but it is also very convenient to recall the following 
 classical definition used by Fujita in \cite{fuj1}, \cite{fuj2}.

 Let $V$ be a holomorphic vector bundle  over a projective curve $B$.

\begin{definition} 
 Let $p :  \PP : = Proj (V) = \PP (V^{\vee}) \ra B$ be the associated projective bundle, and  let $H$ be a  hyperplane divisor (s.t.
 $p_* (\hol_{\PP} (H)) = V$).

Then $V$ is said to be:

(NP)  numerically semi-positive if and only if every quotient bundle $Q$ of $V$ has  degree $deg(Q) \geq 0$,

(NEF) nef if and only if $H$ is nef   on $\PP$,

(A) ample  if and only if $H$ is ample  on $\PP$ 

(SA) semi-ample  if and only $H$ is semi-ample  on $\PP$ (there is a positive multiple  $mH$ such that the linear system $|mH|$ is base point free).
\end{definition}
\begin{rem}

Recall that  (A) $\Rightarrow$ (SA) $\Rightarrow$ (NEF) $\Leftrightarrow$(NP), 
the last  follows from the following result  due to Hartshorne.

\end{rem}
\begin{prop}\label{ample}
A vector bundle $V$ on a curve   is  nef if and only it is numerically semi-positive, i.e.,  if and only if every quotient bundle $Q$ of $V$ has  degree $deg(Q) \geq 0$,
and $V$ is ample  if and only if every quotient bundle $Q$ of $V$ has  degree $deg(Q) > 0$.

\end{prop}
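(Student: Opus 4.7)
The plan is to verify both equivalences by applying the Nakai--Moishezon criterion on the projective bundle $\pi: \PP = \PP(V^\vee) \to B$, exploiting the push-forward identity $H^r \cdot [\PP] = \deg V$ (with $r = \rk V$), which follows from $\pi_*(H^{r-1}) = [B]$ and $\pi_*(H^r) = c_1(V)$.

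For the necessity direction, given a surjection $V \twoheadrightarrow Q$ onto a bundle $Q$ of rank $s$, the dual injection $Q^\vee \hookrightarrow V^\vee$ yields a closed embedding $\iota: \PP(Q^\vee) \hookrightarrow \PP$ with $\iota^*H = H_Q$, the hyperplane class of $\PP(Q^\vee)$. If $H$ is nef (resp. ample), then $H_Q$ inherits the same property, so $Q$ is nef (resp. ample), and applying the push-forward identity on $\PP(Q^\vee)$ gives $\deg Q = H_Q^s \cdot [\PP(Q^\vee)] \geq 0$ (resp. $>0$).

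For sufficiency we induct on $r$, the case $r = 1$ being trivial. By Nakai--Moishezon it suffices to check $H \cdot C \geq 0$ (resp. $>0$) for each irreducible curve $C \subset \PP$ in the nef case, and $H^{\dim Z}\cdot Z > 0$ for all irreducible subvarieties $Z \subset \PP$ in the ample case. If $C$ (or $Z$) lies in a fiber $\pi^{-1}(b) \cong \PP^{r-1}$, then $H$ restricts to the hyperplane class and the inequality is automatic. If $C$ dominates $B$, let $\nu: \tilde C \to C$ be the normalization and set $f = \pi\circ \nu: \tilde C \to B$; the lift $\tilde C \to \PP$ factors through a section of $\PP_{\tilde C}(f^* V^\vee) \to \tilde C$, and such a section corresponds to a quotient line bundle $f^*V \twoheadrightarrow L$ with $\deg L = H \cdot C$.

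The main obstacle is then to transfer the positivity hypothesis from $V$ to $f^* V$: we must show that every quotient line bundle of $f^*V$ has the required nonnegative (resp. positive) degree. The cleanest route is via Harder--Narasimhan theory in characteristic zero, where pullback along a finite surjective morphism of smooth curves preserves semistability and multiplies slopes by $\deg f$, yielding $\mu_{\min}(f^*V) = \deg(f)\cdot \mu_{\min}(V)$; the hypothesis translates exactly to $\mu_{\min}(V) \geq 0$ (resp. $>0$), from which the desired inequality for $\deg L$ follows. Higher-dimensional subvarieties $Z$ in the ample case are reduced to the curve case by a nested induction using general hyperplane sections of $Z$, completing the proof.
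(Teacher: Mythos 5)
The paper itself offers no proof of this proposition: it is quoted as a known result of Hartshorne (\cite{hartVBC}), so your attempt has to be judged on its own. Most of it is sound. The necessity direction via the embedding $\PP(Q^{\vee})\subset\PP(V^{\vee})$ and the identity $H_Q^{s}=\deg Q$ is correct, and so is the nef half of the sufficiency: nefness is by definition tested on curves, curves in fibres are harmless, and a multisection $C$ is converted, after normalizing and pulling back, into a quotient line bundle $L$ of $f^{*}V$ with $\deg L=H\cdot C$; transferring the hypothesis through $\mu_{\min}(f^{*}V)=\deg(f)\,\mu_{\min}(V)$ (true in characteristic zero, since pullback under a finite map of smooth curves preserves semistability) settles the first equivalence. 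Note, though, that the announced induction on $r$ is never actually used.

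The ample half has a genuine gap at its final step. Nakai--Moishezon demands $H^{\dim Z}\cdot Z>0$ for \emph{every} irreducible $Z\subset\PP$, and the proposed reduction ``by general hyperplane sections of $Z$'' does not accomplish this: cutting with members of $|mH|$ presupposes that $H$ is effective or base-point free, which is exactly what is in question, while cutting with an auxiliary very ample divisor $A$ only gives positivity of $H\cdot A^{\dim Z-1}\cdot Z$, which is not the Nakai--Moishezon number $H^{\dim Z}\cdot Z$. The failure is not merely technical: positivity of $H$ on all curves genuinely does not imply ampleness on a projectivized bundle. Mumford's classical example --- a suitably general stable rank-two bundle $E$ of degree $0$ on a curve of genus $2$, for which $\xi=\hol_{\PP(E^{\vee})}(1)$ satisfies $\xi\cdot C>0$ for every irreducible curve $C$ yet $\xi^{2}=\deg E=0$ --- shows that an argument which only ever tests curves cannot close the ample case; the hypothesis $\deg Q>0$ for all quotients (in particular $\deg V>0$) must enter more strongly. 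The standard characteristic-zero repair is to deduce ampleness from the nef statement you have already proved: choose a finite cover $g:B'\ra B$ of degree $d$ with $d\,\mu_{\min}(V)>1$ and a line bundle $A$ of degree $1$ on $B'$, so that $\mu_{\min}(g^{*}V\otimes A^{-1})>0$ and hence $g^{*}V\otimes A^{-1}$ is nef; since a nef bundle twisted by an ample line bundle is ample, $g^{*}V$ is ample and therefore so is $V$. Alternatively one can argue as Hartshorne does, via semistability of symmetric powers together with the cohomological criterion for ampleness. As it stands, your proof establishes the nef equivalence but not the ample one.
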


Moreover, we have also:

\begin{definition}
A flat holomorphic vector bundle  on a complex manifold $M$ is  a holomorphic vector bundle $\sH : = \hol_M \otimes_{\CC} \HH$,
where $\HH$ is a local system of complex vector spaces associated to a representation $\rho : \pi_1(M) \ra GL  (r, \CC)$,
$$  \HH : =( \tilde{M} \times \CC^r )/  \pi_1(M) ,$$
$ \tilde{M} $ being the universal cover of $M$ (so that $ M =  \tilde{M} /  \pi_1(M)$).

 We say that $\sH$ is  unitary flat if it is associated to a  representation $\rho : \pi_1(M) \ra U (r, \CC)$.

\end{definition}
\bigskip
 
 \begin{question} {\bf (Fujita)}
 Is the direct image  $V : = f_* \om_{X|B}$ semi-ample ?
 
 \end{question}

In \cite{cd} we established    a  technical result  which clarifies how Fujita's question is very closely related to Fujita's  II theorem
\begin{theo}\label{semiample}
Let $\sH$ be a unitary flat vector bundle on a projective manifold $M$, associated to a representation $\rho : \pi_1(M) \ra U (r, \CC)$.
Then $\sH$ is nef and moreover $\sH$ is semi-ample if and only if $ Im (\rho )$ is finite.
\end{theo}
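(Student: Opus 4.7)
The plan is to establish nefness of $\sH$, then the direction finite monodromy $\Rightarrow$ semi-ampleness (easy), and finally semi-ampleness $\Rightarrow$ finite monodromy (the main content).

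\emph{Nefness and the easy direction.} Nefness reduces to the following: for any morphism $\phi\colon C \to M$ from a smooth projective curve, $\phi^*\sH$ is unitary flat, hence polystable of degree zero, so every invertible quotient has non-negative degree; by Proposition \ref{ample} $\phi^*\sH$ is nef, and this gives nefness of $\sH$. For the easy implication, if $G := \rho(\pi_1(M))$ is finite, the finite \'etale Galois cover $\pi\colon \tilde M \to M$ corresponding to $\ker \rho$ (with Galois group $G$) pulls $\sH$ back to a holomorphically trivial, hence semi-ample, bundle; semi-ampleness descends under finite surjective morphisms (by a standard norm/averaging argument, producing $G$-invariant sections of suitable tensor powers of $\hol_\PP(H)$ that descend to $\PP$), so $\sH$ itself is semi-ample.

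\emph{Semi-ample $\Rightarrow$ finite monodromy: sections as invariants.} Assume $\hol_\PP(mH)$ is base-point free for some $m \ge 1$. By the projection formula, $H^0(\PP, \hol(mH)) = H^0(M, S^m \sH)$, and base-point-freeness translates fibrewise to: the image of the evaluation map $H^0(M, S^m \sH) \to S^m \CC^r$, viewed as homogeneous degree-$m$ polynomials on $(\CC^r)^\vee$, has empty base locus in $\PP((\CC^r)^\vee)$. The first key input is the identification $H^0(M, S^m \sH) = (S^m \CC^r)^{G}$: by a classical Bochner-type computation, for any holomorphic section $s$ of a unitary flat bundle on a compact K\"ahler manifold one has
\[
  \partial \bar\partial\, |s|_h^2 \;=\; h(\nabla^{1,0} s, \nabla^{1,0} s) \;\ge\; 0,
\]
so $|s|_h^2$ is plurisubharmonic, hence constant by compactness of $M$, forcing $\nabla s = 0$; parallel sections of a flat bundle correspond bijectively to $G$-invariant vectors in the standard fibre.

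\emph{Semi-ample $\Rightarrow$ finite monodromy: GIT endgame and main obstacle.} By the Hilbert-Mumford criterion, the empty base locus of $(S^m \CC^r)^G$ means that every point of $\PP((\CC^r)^\vee)$ is semistable for the action of the reductive complexification $G^\CC \subset \GL(r,\CC)$ (the Zariski closure of $G$, which is reductive because $G \subset U(r)$). If $G$ were infinite, $G^\CC$ would have positive dimension and would contain a non-trivial one-parameter subgroup $\lambda\colon \CC^\times \hookrightarrow G^\CC$; $\lambda$ must act on $(\CC^r)^\vee$ with some non-zero weight $k$, and then a weight-$k$ vector $0 \neq v$ would satisfy $\lim_{t \to 0} \lambda(t)\cdot v = 0$ (if $k > 0$) or the analogous limit as $t \to \infty$, making $[v] \in \PP((\CC^r)^\vee)$ unstable in contradiction with universal semistability. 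Hence $G^\CC$ is zero-dimensional, and so $G$ is finite. I expect this GIT step to be the main obstacle, because it requires combining the Bochner identification of holomorphic sections with $G$-invariant vectors and the careful translation of the fibrewise base-point-free condition on $\hol_\PP(mH)$ into a Hilbert-Mumford stability statement for the monodromy representation.
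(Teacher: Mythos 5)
There is a preliminary point: the paper you were given does not actually prove Theorem \ref{semiample} — it is quoted as a technical result established in \cite{cd} — so there is no in-text proof to compare with line by line. Judged on its own merits, your argument is correct, and its first half follows the natural (and, as far as the citation indicates, the original) strategy: nefness by pulling back to curves, where a unitary flat bundle is semistable of degree zero; the easy implication via the finite \'etale Galois cover trivializing $\sH$, with the norm trick descending base-point-freeness; and, for the hard implication, the identification $H^0(\PP,\hol_{\PP}(mH)) = H^0(M,S^m\sH) = (S^m\CC^r)^{G}$, $G=\im(\rho)$, resting on the maximum-principle fact that holomorphic sections of a unitary flat bundle on a compact manifold are parallel. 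Your endgame is then a clean GIT-style argument: base-point-freeness along one fibre says the $G$-invariant degree-$m$ forms have no common zero on $\PP((\CC^r)^\vee)$, and an infinite $G$ is excluded by exhibiting a nontrivial one-parameter subgroup $\lambda$ of the Zariski closure $G^{\CC}$ and a nonzero-weight vector $v$; note that you only use the trivial direction of Hilbert--Mumford (an invariant of positive degree is constant along $\lambda(t)v$ and hence vanishes at $v$ by continuity as $\lambda(t)v\to 0$), and that reductivity of $G^{\CC}$ (the Zariski closure of a subgroup of $U(r)$ is the complexification of a compact group) is needed only to guarantee that a positive-dimensional $G^{\CC}$ contains a nontrivial $\CC^\times$ rather than merely unipotent one-parameter subgroups. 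If you write this up, make three small points explicit: (i) $G$-invariance of a form already implies $G^{\CC}$-invariance, since the stabilizer of the form is Zariski closed; (ii) the evaluation at a point $x$ of the space of parallel sections of $S^m\sH$ is exactly the invariant subspace $(S^m\sH_x)^{\pi_1(M,x)}$, which is what converts fibrewise base-point-freeness of $\hol_\PP(mH)$ into the statement about invariants; (iii) in the descent step, to produce a $G$-invariant section not vanishing at a prescribed point you should take the norm $\prod_{g\in G}g^*s$ of a \emph{general} member $s$ of the base-point-free system, so that $s$ vanishes at no point of the finite orbit. With these glosses the proof is complete.
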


Hence in  our particular case, where $ V  = A \oplus Q$ with $A$ ample and $Q$ unitary  flat, the semiampleness of $V$  simply means that the flat bundle has finite monodromy
(this is another way of wording the fact that  the representation of the fundamental group 
 $ \rho : \pi_1(B) \ra  U (r, \CC)$ associated to the flat unitary
 rank-r bundle $Q$  has finite image).

The main new result in our joint work \cite{cd} was to provide  a negative answer to Fujita's question in general:
\begin{theo}\label{maintheorem}
There exist  surfaces $X$ of general type endowed with  a fibration $ f : X \ra B$ onto a curve $B$ of genus $\geq 3$, and with fibres of genus $6$,  such that $V : = f_* \om_{X|B}$ 
splits as a direct sum $ V = A  \oplus Q_1 \oplus Q_2$, where $A$ is an ample   rank-2  vector bundle, and the flat unitary rank-2 summands $Q_1, Q_2$
have infinite monodromy group (i.e., the image of $\rho_j$ is infinite). In particular, $V$ is not semi-ample.

 \end{theo}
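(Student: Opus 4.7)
The plan is to exhibit such surfaces explicitly as resolutions of pullbacks of a hypergeometric family à la Deligne--Mostow. Take $n=7$ and fix a 4-tuple of weights $(a_1,a_2,a_3,a_4)$ with $\sum a_j/n = 2$; consider the family $\sC \to U := \PP^1 \setminus \{0,1,\infty\}$ whose fibre over $\la$ is the smooth model of the cyclic $\ZZ/n$-cover of $\PP^1$ branched at $\{0,1,\la,\infty\}$ with local monodromies $\exp(2\pi i a_j / n)$. By Hurwitz the generic fibre has genus $n-1 = 6$, and the fibrewise $\ZZ/n$-action decomposes the local system $R^1 f_* \CC$ and its Hodge filtration into character eigenspaces.

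Next, I would take a finite base change $B \to \PP^1$, with $B$ of genus $\geq 3$, so that the pulled back family, after resolution of the singularities of its total space, becomes a semistable fibration $f : X \to B$. The direct image $V = f_* \om_{X|B}$ inherits a decomposition into character eigenspaces under the Galois action of $\ZZ/n$. Applying Fujita's Theorem \ref{fuj2} eigenspace by eigenspace, one obtains a splitting $V = A \oplus Q_1 \oplus Q_2 \oplus \cdots$ where the eigenspaces on which the full Hodge structure is of type $(1,0)$ give an ample summand (Griffiths/Kawamata positivity upgraded to ampleness via Proposition \ref{ample}), while the remaining eigenspaces split into ample plus unitary flat parts. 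Choosing $(a_1,\ldots,a_4)$ carefully, exactly one eigenspace contributes a rank $2$ ample bundle $A$, and two conjugate eigenspaces contribute the rank $2$ unitary flat summands $Q_1$ and $Q_2$.

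The main obstacle, and the heart of the argument, is proving that the monodromy representations $\rho_j : \pi_1(B) \ra U(2,\CC)$ attached to $Q_1, Q_2$ have infinite image. These are precisely the restrictions of the Deligne--Mostow hypergeometric monodromy to the invariant rank $2$ subspaces coming from the chosen characters of $\ZZ/n$. I would establish infiniteness either by direct identification of an element of infinite order (a product of local monodromies around the three punctures) or, as the paper indicates is the cleaner route, by invoking the Beukers--Heckman classification: the only rank $2$ hypergeometric groups with finite projective image are those on Schwarz's list, and the $n=7$ parameters under consideration do not appear there. The fact that the base change $B \ra \PP^1$ is surjective and unramified away from $\{0,1,\infty\}$ ensures infiniteness is preserved after pullback to $\pi_1(B)$.

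Finally, I verify that $X$ is of general type and that $V$ is not semi-ample. Positivity of $A$ combined with fibrewise positivity of $K_{X|B}$ and the assumption $g(B) \geq 3$ yields $K_X$ big; passing to the canonical model makes $K_X$ ample. Non-semi-ampleness of $V$ is then immediate from Theorem \ref{semiample}: since $\Im(\rho_1)$ is infinite, the flat unitary summand $Q_1$ is not semi-ample, hence neither is $V = A \oplus Q_1 \oplus Q_2$.
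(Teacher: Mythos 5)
Your overall route --- the $\ZZ/7$ Deligne--Mostow family of cyclic covers of $\PP^1$ branched in four points, a semistable base change $B\ra\PP^1$, the character decomposition of $V=f_*\om_{X|B}$, Fujita's Theorem \ref{fuj2}, Schwarz's list for the infiniteness, and Theorem \ref{semiample} for non-semi-ampleness --- is the same as the one used in \cite{cd} and recalled in this paper. However, there is a genuine error in the step where you decide which character eigenspaces produce $A$ and which produce $Q_1,Q_2$, and the rank bookkeeping built on it cannot work. If for a character $j$ the whole eigenspace $H^1(C,\CC)_j$ is of type $(1,0)$ (equivalently $\sum_i\mu_{i,j}=3$ in the notation of \eqref{eq2}), then the corresponding piece $V_j$ of $V$ coincides with the flat bundle $\sH_j$ itself: it has degree zero and carries the (here positive definite) polarization form as a flat unitary metric, so it is precisely a rank-$2$ unitary flat summand and can never be ample. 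Conversely, the ample part can only come from characters of mixed type (indefinite form), and each of those contributes only a rank-one piece $V_j\subsetneq\sH_j$. So your assignment is inverted: the pure $(1,0)$ characters give $Q_1,Q_2$, two indefinite characters together give $A$, and in particular ``exactly one eigenspace contributes a rank $2$ ample bundle $A$'' is impossible, since a rank-$2$ eigenpiece of $V$ is automatically flat of degree $0$. As written, your decomposition would yield flat summands of rank at most one and no rank-$2$ ample bundle, so the claimed splitting $V=A\oplus Q_1\oplus Q_2$ does not come out of your argument.

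Once the labels are corrected (for instance exponents $(2,4,4,4)$ for $n=7$, where exactly two characters satisfy $\sum_i\mu_{i,j}=3$ and two are indefinite), the rest is essentially the argument of \cite{cd}: the Schwarz (equivalently rank-$2$ Beukers--Heckman) criterion is applied to the two \emph{definite} hypergeometric local systems, and infiniteness survives the base change because the representation of $\pi_1(B)$, through which the punctured monodromy factors once the local monodromies have become trivial, has image of finite index in the original hypergeometric monodromy group. Two points you would still need to supply: an actual verification that the chosen parameters do not occur in Schwarz's list (or, as in Lemma \ref{lemmafinite}, that some Galois conjugate of the invariant form is indefinite), and an argument that the two rank-one pieces coming from the indefinite characters really have positive degree --- e.g.\ via irreducibility of the corresponding hypergeometric local systems as in Proposition \ref{propapp}, or a degree computation --- since Proposition \ref{ample} by itself only gives nefness, not ampleness, of those summands.
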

 
 Recall however that in special cases  one can conclude that  $V$ is semiample.

 \begin{cor}

Let $f : X \ra B $ be  a fibration of a compact K\"ahler manifold $X$ over a projective curve $B$.
Then  $ V : = f_* \om_{X|B} $
is  a direct sum  $ V  = A \bigoplus (\oplus_{i=1}^h Q_i)$, 
with $A$   ample  and each $Q_i$  unitary  flat 
without any nontrivial degree zero quotient. 
Moreover,

(I) if $Q_i$ has rank equal to 1, then it is a torsion bundle ($\exists \  m$ such that $Q_i^{\otimes m}$
is trivial) (Deligne)

(II) if the  curve $B$ has genus 1, then rank  $(Q_i) = 1 , \ \forall i.$ 

(III)  In particular,  if  $B$ has genus  at most  1, then $V$ is semi-ample.

\end{cor}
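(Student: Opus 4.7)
The plan is to refine Fujita's decomposition (Theorem~\ref{fuj2}) and then invoke a result of Deligne on rank-one pieces of variations of Hodge structure. Starting from $V = A \oplus Q$ with $A$ ample and $Q$ unitary flat, the monodromy $\rho : \pi_1(B) \to U(r,\CC)$ is completely reducible; decomposing the Hermitian space orthogonally into irreducible subrepresentations yields a holomorphic splitting $Q = \bigoplus_i Q_i$, where each $Q_i$ is unitary flat with irreducible monodromy. By Narasimhan--Seshadri each such $Q_i$ is stable of degree zero, so every proper subbundle has strictly negative degree and every proper quotient has strictly positive degree; in particular $Q_i$ has no nontrivial degree-zero quotient, which is the property required.

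For part (I), when $Q_i$ has rank one the associated character $\chi_i : \pi_1(B) \to U(1)$ corresponds to a one-dimensional flat unitary sub-piece of the Hodge bundle of the polarizable VHS $R^{n-1}f_*\CC$, and Deligne's theorem on rank-one sub-VHS asserts that such characters must have finite image. The underlying reason is that the $\QQ$-local system is defined over a number field, so a purely one-dimensional Hodge sub-summand takes algebraic values, which forces $\chi_i$ to factor through a finite cyclic group; equivalently $Q_i^{\otimes m} \cong \Oh_B$ for some $m \geq 1$.

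Part (II) is immediate from Schur's lemma: when $B$ has genus one, $\pi_1(B) \cong \ZZ^2$ is abelian, so every irreducible unitary representation of an abelian group is one-dimensional, and hence each $Q_i$ has rank one.

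Part (III) then follows by combining (I) and (II) with Theorem~\ref{semiample}. If $g(B) = 0$ then $\pi_1(B)$ is trivial, so $Q$ is a trivial bundle $\Oh_B^{\oplus k}$ and $V = A \oplus \Oh_B^{\oplus k}$ is plainly semi-ample. If $g(B) = 1$, parts (I) and (II) together force each $Q_i$ to be a torsion line bundle; in particular its monodromy is finite, and Theorem~\ref{semiample} gives the semi-ampleness of each $Q_i$, whence $V$ is semi-ample. The only genuinely nontrivial input is Deligne's rigidity theorem in (I); everything else is a formal consequence of unitary representation theory and the theorems already recalled in the text.
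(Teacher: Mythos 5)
Your proposal is correct and follows essentially the same route as the paper, whose proof is only a sketch: decompose $Q$ into unitary flat summands with irreducible monodromy (you add the Narasimhan--Seshadri/second-fundamental-form justification for the absence of degree-zero quotients, which the paper leaves implicit), cite Deligne for the rank-one torsion statement (I), use the abelianness of $\pi_1(B)$ for (II), and combine with Theorem~\ref{semiample} for (III). The only caveat is that your ``underlying reason'' for Deligne's theorem is a heuristic gloss rather than a proof (the genuine arguments are Deligne's, or Simpson's via Gelfond--Schneider), but since the paper likewise treats (I) purely as a citation, this matches its level of detail.
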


\begin{proof}
{The idea of the proof is as follows:}

(I) was proven by Deligne  (and by Simpson using the theorem of Gelfond-Schneider), while

(II) Follows since  $\pi_1(B)$ is abelian, if $B$ has genus $1$: 
hence every representation splits as a direct sum of 1-dimensional ones.

\end{proof}

  In our construction for theorem  \ref{maintheorem}, we started from  hypergeometric integrals associated to a cyclic group of order 7,
 and we derived the non finiteness of the monodromy as a consequence of the classification
 due to  Schwarz (\cite{Schwarz}).
 
 However, in order to provide a semistable fibration, we first resolved the singularities of the resulting surface fibres over $\PP^1$,
 then applied blow ups in order to achieve that the reduced divisors associated to the fibres would be normal crossing divisors,
 and then applied the general method in order to construct a semistable base change.
 
 The final result was that these examples had a base of much larger genus, and the description given was not fully detailed.
 The novelty of this paper, answering a question by Osamu Fujino, is to provide an explicit semistable fibration without 
 having to take a base change where the genus of the base curve $B$ becomes too large. This will be discussed in the fourth section,
 where we shall also give a simpler proof.

 An interesting observation, concerning the crucial difference of the roles played by unitary flat bundles versus  flat bundles 
in our context,   is given  by the following result.
While a unitary flat bundle is nef, the same does not hold for a flat bundle. This is no surprise,
as communicated to the first author by  Janos Koll\'ar, in view of  the following old theorem  of Andr\'e
  Weil \cite{weil}, reproven by Atiyah in  \cite{atiyah}.

 \begin{theo}{\bf (Weil-Atiyah)}
A vector bundle $V$ over a projective curve is (isomorphic to) a flat holomorphic bundle if and only if,    in
its unique decomposition as a direct sum $V = \oplus_i V_i$ of indecomposable bundles, each of the summands $V_i$ has degree zero.
\end{theo}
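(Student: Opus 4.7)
The plan is to reduce the statement to two classical inputs: Atiyah's theorem characterizing indecomposable bundles on a curve that admit a holomorphic connection, and the Krull--Schmidt uniqueness of the decomposition into indecomposables for coherent sheaves on a projective variety. The underlying observation is that, since $\dim B=1$, one has $\Omega^2_B=0$, so every holomorphic connection on a bundle over $B$ is automatically integrable, and the Riemann--Hilbert correspondence identifies the flat holomorphic bundles of the definition above with exactly the bundles admitting a holomorphic connection. Atiyah's theorem, which I will take as a black box, asserts that on a smooth projective curve an \emph{indecomposable} vector bundle admits a holomorphic connection if and only if its degree is zero.

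Granting this, the ``if'' direction of the Weil--Atiyah statement is immediate. If $V=\bigoplus_i V_i$ with each $V_i$ indecomposable of degree zero, then each $V_i$ carries a holomorphic (hence flat) connection $\nabla_i$, and $\nabla:=\bigoplus_i \nabla_i$ is a flat connection on $V$; thus $V$ is a flat holomorphic bundle in the sense of the definition above.

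For the ``only if'' direction, suppose $V$ is flat and let $\nabla\colon V\to V\otimes \Omega^1_B$ be a holomorphic connection on it. Decompose $V=\bigoplus_i V_i$ into indecomposables by Krull--Schmidt, and let $\iota_i\colon V_i\hookrightarrow V$ and $p_i\colon V\twoheadrightarrow V_i$ be the canonical inclusions and projections. Define the diagonal components
\[
\nabla_i \;:=\; (p_i\otimes \id_{\Omega^1_B})\circ \nabla\circ \iota_i\colon V_i\longrightarrow V_i\otimes \Omega^1_B.
\]
Writing $\nabla$ as a matrix of $\CC$-linear operators with respect to the decomposition, a direct check shows that the diagonal entries $\nabla_i$ satisfy the Leibniz rule $\nabla_i(fs)=df\otimes s+f\nabla_i(s)$ while the off-diagonal entries are $\hol_B$-linear. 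Hence each $V_i$ admits a holomorphic connection, and Atiyah's indecomposable-curve criterion forces $\deg(V_i)=0$.

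The only genuinely nontrivial ingredient is Atiyah's indecomposable-curve theorem itself; its proof uses the Atiyah extension $0\to V\otimes\Omega^1_B\to J^1(V)\to V\to 0$, the identification of its class $a(V)\in H^1(B,\End(V)\otimes\Omega^1_B)$ as the obstruction to the existence of a holomorphic connection, and a cohomological argument on indecomposable bundles on curves which reduces the vanishing of $a(V)$ to the vanishing of its trace $c_1(V)\in H^1(B,\Omega^1_B)$. All remaining steps are formal manipulations with connections and the uniqueness of the Krull--Schmidt decomposition, so this is the only place where the curve hypothesis is used in an essential way.
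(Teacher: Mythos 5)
The paper does not prove this statement at all: it is quoted as a classical theorem with references to Weil and to Atiyah's \emph{Complex analytic connections in fibre bundles}, so there is no in-paper argument to compare yours against. On its own terms your proposal is correct. The two reductions you make are both sound: on a curve $\Omega^2_B=0$, so ``flat holomorphic bundle'' in the sense of the paper's definition is equivalent to ``bundle admitting a holomorphic connection'' (the kernel of such a connection is the required local system); and the diagonal-block computation correctly shows that each Krull--Schmidt summand of a bundle with a connection again carries a connection, since $p_i\circ\iota_i=\id$ forces the Leibniz term to survive on the diagonal while $p_j\circ\iota_i=0$ makes the off-diagonal blocks $\hol_B$-linear. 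Together with Atiyah's Krull--Schmidt uniqueness and his criterion for indecomposable bundles, this gives both implications. You should be aware, though, that what you have produced is a reduction of the stated theorem to its indecomposable case: the black box you invoke (an indecomposable bundle on a curve admits a holomorphic connection iff its degree vanishes, proved via the Atiyah class and its trace $c_1$) is precisely the substance of the Weil--Atiyah theorem, and your formal steps mirror how Atiyah himself deduces the general statement from it. So the proposal is a correct and essentially standard organization of the classical proof rather than an independent argument.
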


In our situation we proved (again in \cite{cd}): 
  \begin{theo}\label{Kodaira}
Let  $ f : X \ra B$  be a Kodaira fibration, i.e., $X$ is a surface and all the fibres of $f$ are smooth curves not all isomorphic to each other.
Then the direct image sheaf  $V : = f_* \om_{X|B}$ has strictly positive degree hence $\HHH : = R^1 f_* (\CC) \otimes \hol_B$ is a flat bundle which is not nef (i.e., not numerically semipositive).
 \end{theo}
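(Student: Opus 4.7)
The plan is to start with Fujita's second theorem (Theorem \ref{fuj2}): write $V = A \oplus Q$ with $A$ ample and $Q$ unitary flat. Since unitary flat bundles have degree zero and, by Proposition \ref{ample}, ample vector bundles on a curve have strictly positive degree, the inequality $\deg V > 0$ is equivalent to the statement that the ample summand $A$ is nontrivial. Equivalently, one must rule out that $V$ itself is entirely a unitary flat bundle.

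I would argue this by contradiction, supposing that $V$ is unitary flat. Since $f$ is a Kodaira fibration, all fibres of $f$ are smooth, so the local system $R^{1} f_{*}(\CC)$ is defined over all of $B$ and underlies a polarized variation of Hodge structures; in particular $\HHH = R^{1}f_{*}(\CC) \otimes \hol_{B}$ is a flat vector bundle on $B$. Relative Serre duality identifies $R^{1}f_{*}\hol_{X}$ with $V^{\vee}$, so the Hodge filtration yields a short exact sequence
\[
0 \to V \to \HHH \to V^{\vee} \to 0 .
\]
By Griffiths' classical curvature formula for a VHS, the Chern curvature of $V$ with its Hodge metric is expressed in terms of the second fundamental form $\beta : V \to V^{\vee} \otimes \Omega^{1}_{B}$ of the embedding $V \hookrightarrow \HHH$; under the identification $V^{\vee} \cong R^{1} f_{*}\hol_{X}$, this $\beta$ is exactly the map obtained by pairing the Kodaira--Spencer class $\kappa : T_{B} \to R^{1}f_{*} T_{X|B}$ with cup product along the fibres. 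Unitary flatness of $V$ would force its Chern curvature to vanish, hence $\beta \equiv 0$, hence $\kappa \equiv 0$; but then the family would be isotrivial, contradicting the defining property of a Kodaira fibration. Carrying out carefully the identification of the Hodge-theoretic second fundamental form with the Kodaira--Spencer cup product is the main technical point of the argument.

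Granting $\deg V > 0$, the non-nefness of $\HHH$ follows at once from the same exact sequence. Taking degrees gives $\deg \HHH = \deg V + \deg V^{\vee} = 0$, yet $\HHH$ admits the holomorphic quotient $V^{\vee}$ of strictly negative degree $-\deg V < 0$. By Proposition \ref{ample} (Hartshorne's criterion), the existence of a quotient of negative degree obstructs numerical semi-positivity, so $\HHH$ is not nef.
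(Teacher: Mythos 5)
Your overall strategy is the intended one (the paper itself defers the proof of this theorem to \cite{cd}, but the mechanism is exactly the curvature argument sketched in Section 2): by Theorem \ref{fuj2} one only has to exclude that $V$ is unitary flat of degree zero, and once $\deg V>0$ is known, the non-nefness of $\HHH$ follows from the Hodge-filtration sequence and the quotient $V^{\vee}\cong R^1f_*\hol_X$ of degree $-\deg V<0$, via Proposition \ref{ample}; that second half of your argument is correct. One small imprecision first: ``unitary flatness of $V$ forces its Chern curvature to vanish'' conflates two metrics. The second fundamental form $\beta$ lives in the Hodge-theoretic exact sequence and is computed with the Hodge metric, not with the flat unitary metric; the correct deduction is that $\deg V=0$ together with Griffiths' semipositivity of the Hodge-metric curvature forces that curvature, and hence $\beta$, to vanish identically --- exactly as in part II of the paper's sketch of Fujita's theorem.

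The genuine gap is the step ``$\beta\equiv 0$, hence $\kappa\equiv 0$.'' This inference is infinitesimal Torelli: it needs the cup-product map $H^1(X_b,T_{X_b})\ra \Hom\bigl(H^0(\om_{X_b}),H^1(\hol_{X_b})\bigr)$ to be injective, and by Max Noether's theorem this fails precisely for hyperelliptic fibres of genus $\geq 3$. Kodaira fibres have genus $\geq 3$, and nothing in your argument excludes hyperelliptic fibres (ruling out that \emph{all} fibres are hyperelliptic requires a separate argument, e.g. affineness of the hyperelliptic locus), so as written the conclusion $\kappa\equiv 0$ does not follow; note also that you flag the identification of $\beta$ with the Kodaira--Spencer cup product as the main technical point, whereas that identification is the standard Griffiths computation and the injectivity you silently used is where the difficulty sits. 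The repair is to bypass $\kappa$ altogether: $\beta$ is the differential of the period map of the weight-one VHS, so $\beta\equiv 0$ already makes the period map constant; then all Jacobians $J(X_b)$ are isomorphic as principally polarized abelian varieties, and the \emph{global} Torelli theorem (valid for every smooth curve, hyperelliptic or not) gives that all fibres are isomorphic, contradicting the definition of a Kodaira fibration. With that substitution your proof is sound and coincides in substance with the argument of \cite{cd}.
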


\subsection{Semistable reduction}

Assume now that $ f : X \ra B$ is a fibration of a compact K\"ahler manifold $X$ over a projective curve $B$, and consider 
the invertible sheaf $ \om : = \om_{X|B} = \hol_X (K_X - f^* K_B).$ 

By Hironaka's theorem there is a sequence of blow ups with smooth centres $ \pi: \hat{X} \ra X$ such that $$ \hat{f} := f \circ \pi: \hat{X} \ra B$$
has the property that all singular  fibres $F$ are such that $ F = \sum_i m_i F_i$, and $F_{red} =  \sum_i  F_i$ is a normal crossing divisor.

Since $\pi_* \hol_{\hat{X} } (K_{\hat{X} }) = \hol_X (K_X)$ we obtain 

$$ \hat{f} _* \om_{\hat{X}|B } = \hat{f} _* \hol_{\hat{X} } (K_{\hat{X} } - \hat{f} ^* K_B ) = f_* \hol_X (K_X - f^* K_B) = f_*  \om_{X|B} .$$

Therefore one can assume wlog that all the   fibres of $f$ have reduction which is a  normal crossing divisor, and the well known semistable reduction theorem, whose statement is
here reproduced, shows that one can reduce to the case where the fibration is semistable, i.e., all fibres are reduced and yield normal crossing divisors.

\begin{theo} {\bf (Semistable reduction theorem, \cite{toroidalEmb})}
There exists a cyclic Galois covering of $B$, $B' \ra B = B' /G$, such that the normalization $X''$ of the fibre product $ B' \times_B X$
admits a resolution $ X' \ra X''$ such that the resulting fibration $f' : X' \ra B'$ has the property that all the fibres  are reduced and 
normal crossing divisors.
\end{theo}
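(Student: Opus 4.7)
The plan follows the classical toroidal strategy of \cite{toroidalEmb}. As explained just before the statement, after Hironaka we may assume that every singular fibre of $f : X \to B$ decomposes as $f^{-1}(b_i) = \sum_j m_{ij} F_{ij}$ with simple normal crossing support, where $S = \{b_1,\dots,b_r\} \subset B$ is the finite critical locus. Set $N := \mathrm{lcm}\{m_{ij}\}$; this will be the degree of the cover.

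\textbf{Construction of the cover.} The first step is to produce a cyclic Galois cover $\pi : B' \to B$ of degree $N$ totally ramified of order $N$ at each $b_i$. Let $D$ be an effective divisor supported on $S$, enlarged if necessary by auxiliary points so that $N \mid \deg D$; by Kummer theory there exists $L \in \Pic(B)$ with $L^{\otimes N} \cong \hol_B(D)$ (for $g(B) \ge 1$ one uses surjectivity of multiplication by $N$ on the Jacobian, for $g(B)=0$ a degree computation). A section of $L^{\otimes N}$ with divisor $D$ endows $\bigoplus_{i=0}^{N-1} L^{-i}$ with an $\hol_B$-algebra structure, and $B' := \Spec$ of this algebra is the desired cyclic cover with $G = \ZZ/N$. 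Any auxiliary branch points fall outside $S$, where $f$ is already smooth, so they do not affect the downstream fibration.

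\textbf{Local analysis of $X''$.} Away from $f^{-1}(S)$ the base change $B' \times_B X$ is smooth. At a smooth point of a single component $F_{ij}$ the map $f$ has local equation $t = u^{m_{ij}}$, so after pulling back by $t = s^N$ (with $N = m_{ij} e$) the normalisation splits as $m_{ij}$ disjoint smooth sheets $u = \zeta s^e$, $\zeta^{m_{ij}} = 1$, each smooth over $B'$. At a crossing $F_{ij} \cap F_{ij'}$ the local model is $t = u^a v^b$ with $a,b \mid N$, so $X''$ is locally the normalisation of $u^a v^b = s^N$, a $2$-dimensional affine toric variety with at worst cyclic quotient singularities concentrated along the central fibre.

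\textbf{Toric resolution and verification.} Finally I would resolve $X''$ by a toric subdivision of each local fan, refining until consecutive lattice generators form a $\ZZ$-basis; this yields a smooth $X' \to X''$ and a fibration $f' : X' \to B'$ whose fibre over each preimage of $b_i$ is the union of the proper transforms of the $F_{ij}$ together with a chain of exceptional smooth rational curves, each with multiplicity exactly $1$ and meeting transversally. The main obstacle is precisely this last verification: one must show that the local subdivisions can be made compatibly across the crossings, that they kill the multiplicities in the central fibre, and that the resulting components meet transversally. This is the lattice-theoretic heart of the toroidal method and is the content of the resolution theorem in \cite{toroidalEmb}.
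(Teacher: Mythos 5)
The paper offers no proof of this statement: it is quoted directly from \cite{toroidalEmb} and used as a black box, so there is no internal argument to compare yours against. Your outline is the standard toroidal proof and is essentially sound as a sketch, with the genuinely hard step correctly identified and deferred to the resolution theorem of \cite{toroidalEmb}. Two points need tightening. First, in the Kummer construction you must take the branch divisor $D$ reduced at the points of $S$ (or at least with multiplicities prime to $N$): the cyclic cover determined by a section of $L^{\otimes N}$ with divisor $D$ is totally ramified of order $N$ over $b_i$ only when $\gcd(N, \mathrm{mult}_{b_i} D)=1$, and without total ramification of order $N=\mathrm{lcm}\{m_{ij}\}$ the pullback does not kill all the multiplicities. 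Second, your local analysis at the crossings treats only the model $t=u^a v^b$, i.e.\ the case $\dim X = 2$ or the codimension-two strata; the statement in the paper is for a compact K\"ahler manifold $X$ of arbitrary dimension, where deeper strata give local models $t = u_1^{a_1}\cdots u_k^{a_k}$, and the compatible simultaneous resolution of all these toric pieces is precisely the polyhedral subdivision theorem that constitutes the bulk of \cite{toroidalEmb}. You acknowledge that this is where the real content lies, but you should say explicitly that the two-variable picture does not capture the general case, and that for the application in this paper (where $X$ is a surface) the two-variable analysis together with Hirzebruch--Jung resolution already suffices.
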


\begin{equation*}
\xymatrix{
X' \ar[d]^{f'}\ar[r]^{v'} & X'' \ar[d]\ar[r]^{v''}  & X\ar[d]^f\\
B' \ar[r]^{Id} &B'  \ar[r]^u & B ,}
\end{equation*}

The following proposition was used in \cite{cd} while  reducing the proof of Fujita's second theorem to the semistable case.

\begin{prop}\label{sstablered}
The sheaf $V' : =   f'_* \om_{X'|B'}$ is a subsheaf of the sheaf $ u^* (V)$, where $V : =   f_* \om_{X|B} $, and the cokernel 
$ u^* (V) / V'$ is concentrated on the set of points corresponding to singular fibres of $f$.

\end{prop}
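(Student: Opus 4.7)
The plan is to construct a natural morphism $\varphi \colon V' \to u^* V$ which is an isomorphism over the locus where $f$ is smooth; injectivity and the description of the cokernel then follow formally since $V'$ is torsion free on the smooth curve $B'$.

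Let $\Delta \subset B$ be the finite set of critical values of $f$, and put $U = B \setminus \Delta$, $U' = u^{-1}(U)$. Denote by $g \colon X_1 := B' \times_B X \to X$ and $f_1 \colon X_1 \to B'$ the two projections, and by $h := v'' \circ v' \colon X' \to X_1$ the canonical proper birational morphism, which factors through the normalization $X''$. Since $u$ is a finite flat map of smooth curves, so is $g$; the chain rule for relative dualizing sheaves together with the identity $\om_{X_1|X} \cong f_1^* \om_{B'|B}$ (flat base change of the relative dualizing sheaf) gives $\om_{X_1|B'} \cong g^* \om_{X|B}$. Flat base change for the proper morphism $f$ along $u$ therefore yields a canonical isomorphism
\[
u^* V \;=\; u^* f_* \om_{X|B} \;\cong\; (f_1)_* \om_{X_1|B'}.
\]
Over $U'$ the morphism $f_1$ is smooth, so $X_1|_{U'}$ is smooth and $h|_{U'}$ is an isomorphism; hence the displayed identification restricts over $U'$ to a tautological isomorphism $V'|_{U'} \cong (u^* V)|_{U'}$.

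To globalize, note that $X_1$ is Cohen-Macaulay: its fibres over $B'$ are pullbacks of the fibres of $f$, which are Cartier divisors in the smooth variety $X$, hence themselves Cohen-Macaulay. Thus $\om_{X_1|B'}$ is an invertible dualizing sheaf. Grothendieck-Serre duality for the proper birational morphism $h$ from the smooth $X'$ to the Cohen-Macaulay $X_1$ provides a canonical trace map $h_* \om_{X'} \to \om_{X_1}$; twisting by $f_1^* \om_{B'}^{-1}$ and pushing forward by $f_1$, using $f' = f_1 \circ h$, produces the desired global comparison
\[
\varphi \colon V' \;=\; f'_* \om_{X'|B'} \;\longrightarrow\; (f_1)_* \om_{X_1|B'} \;\cong\; u^* V,
\]
which restricts over $U'$ to the tautological isomorphism found above.

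Consequently both the kernel and the cokernel of $\varphi$ are coherent sheaves on $B'$ supported on the finite set $u^{-1}(\Delta)$. Since $V'$ is torsion free on the smooth curve $B'$ (indeed locally free by Fujita's Theorem~\ref{fuj2}), the kernel of $\varphi$ vanishes, yielding the embedding $V' \hookrightarrow u^* V$; the cokernel is supported exactly at the points of $B'$ lying above singular fibres of $f$, as asserted. The main obstacle is a diagram chase confirming that the Grothendieck trace is compatible with the flat base change isomorphism of the first step, so that $\varphi$ really does restrict to the tautological isomorphism over $U'$; this is a functorial check but needs to be set up carefully to pin down both natural transformations simultaneously.
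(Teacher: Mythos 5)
The paper does not actually prove Proposition~\ref{sstablered} here: it is quoted from \cite{cd} (Proposition 2.9 there), and the remark following it alludes to a ``precise calculation'' in that proof which identifies the cokernel at the singular fibres. So I can only judge your argument on its own merits, and on those merits it is essentially correct: base change of the relative dualizing sheaf along the flat $u$ gives $u^*V \cong (f_1)_*\om_{X_1|B'}$ for $X_1 = B'\times_B X$ (which is Cohen--Macaulay, even Gorenstein over $B'$, since the fibres of $f$ are Cartier divisors in the smooth $X$), the Grothendieck trace for the proper birational $X' \to X_1$ gives the comparison map, genericity gives an isomorphism over $U'$, and torsion-freeness of $V'$ kills the kernel. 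Note, however, that your softer duality argument only yields the support statement, whereas the calculation referred to in \cite{cd} pins down the cokernel explicitly, which is what the authors use afterwards to decide when unitary flat summands on $B'$ become ample on $B$; for the proposition as stated this is not needed.

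Three small repairs. First, $v''\circ v'$ is the map $X' \to X$ in the paper's diagram, not a map to $X_1$; the morphism $h\colon X'\to X_1$ you want is the one induced by the universal property of the fibre product (equivalently, the composite of $X'\to X''$ with the normalization map $X''\to X_1$), and it is through this factorization (duality for the finite map $X''\to X_1$ plus the standard trace for the resolution of the normal $X''$) that the trace map is most painlessly obtained, since $X_1$ need not be normal. Second, you should say explicitly that the resolution $X'\to X''$ may be taken to be an isomorphism over $U'$ (where $X_1$ is already smooth); alternatively, observe that further blow-ups of a smooth locus do not change $\pi_*\om$, as recalled in the subsection on semistable reduction, so $V'$ is unaffected. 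Third, do not invoke Theorem~\ref{fuj2} for the local freeness of $V'$: the proposition is used precisely to reduce the proof of Fujita's second theorem to the semistable case, so that appeal would be circular; fortunately all you need is torsion-freeness of $f'_*\om_{X'|B'}$, which is elementary since $\om_{X'|B'}$ is invertible on the integral $X'$ and $f'$ is surjective.
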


The proposition  shows indeed that, when the fibration is not semistable,
then certain unitary flat summands on $B'$ may yield ample summands on $B$; and the precise calculation given in its proof
helps to decide exactly when this happens.

\section{Fujita's second  theorem}

The tools used for the proof of Fujita's second  theorem involve 
 differential geometric notions of positivity, which  we now recall.

\begin{definition}
Let $(E,h)$ be a  Hermitian vector bundle on a complex manifold $M$. Take the canonical Chern connection 
associated to the Hermitian metric $h$, and denote by $\Theta(E,h)$ the associated Hermitian curvature,
which gives a Hermitian form on the complex vector bundle bundle $T_M \otimes E$.

Then  one says that $E$ is Nakano positive (resp.: semi-positive) if there exists a Hermitian metric $h$
such  that the Hermitian form associated to $\Theta(E,h)$ is strictly positive definite (resp.: semi-positive definite).

\end{definition}

\begin{rem}
Umemura proved (\cite{um})  that a vector bundle $V$ over a curve $B$ is positive (i.e.,  Griffiths positive, or equivalently Nakano positive) if and only if $V$ is ample.
\end{rem}

One of the principal positivity property can be summarized through the well known slogan: `curvature decreases in subbundles'.

Except that one has to formulate the statement properly as follows: curvature decreases in Hermitian subbundles.
Indeed the example of Kodaira fibrations produces subbundles of a flat bundle (they have zero curvature)
which are positively curved.

We pass now to sketching the ideas used in the proof of Fujita's second theorem.

\begin{theo}{\bf (Fujita, \cite{fuj2})}\label{fuj2}

Let $f : X \ra B $ be  a fibration of a compact K\"ahler manifold $X$ over a projective curve $B$, and consider 
the direct image  sheaf $$ V : = f_* \om_{X|B} = f_* ( \hol_X (K_X - f^* K_B)).$$
Then $V$ splits as a direct sum  $ V  = A \oplus Q$, where $A$ is an ample vector bundle and $Q$ is a unitary flat bundle.

\end{theo}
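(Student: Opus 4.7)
The plan is to equip $V$ with the Hodge $L^2$-metric and to combine its Nakano semi-positivity with Zucker's boundary estimates and Kawamata's logarithmic-growth degree lemma, in order to isolate a maximal unitary flat direct summand $Q$ and force its orthogonal complement $A$ to be ample. As a preliminary step, using the semistable reduction theorem together with Proposition \ref{sstablered}, I would reduce to the case where $f$ is semistable: the cokernel of the inclusion $V' := f'_* \om_{X'|B'} \into u^*(V)$ induced by a base change $u : B' \ra B$ is supported on the singular-fibre locus, so a decomposition $V' = A' \oplus Q'$ over $B'$ can be pushed down to $B$, possibly absorbing parts of $Q'$ into the ample summand on $B$ --- a transformation entirely consistent with the statement.

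In the semistable case, over the open locus $B^\circ \subset B$ where $f$ is smooth, I would endow $V|_{B^\circ}$ with the $L^2$ Hodge metric $h$ induced by the Hermitian form $\alpha \mapsto (\sqrt{-1})^{n-1} \int_{X_b} \alpha \wedge \bar\alpha$ on $H^0(X_b, \Omega^{n-1}_{X_b})$. Griffiths' curvature formula yields that $\Theta(V|_{B^\circ}, h)$ is Nakano semi-positive, and by Zucker's estimates (\cite{zucker}) the metric $h$ extends across each point of $B \setminus B^\circ$ with at most logarithmic norm growth, supplying the input needed for global degree computations. Let $Q \subseteq V$ be the maximal polystable subsheaf of degree $0$; this exists because $V$ is nef by Fujita's first theorem, so every Harder-Narasimhan graded piece has non-negative degree. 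Applying Kawamata's lemma to $\det$ of any sub-quotient, with the induced metric of logarithmic growth, controls the degree through a curvature integral; since by Griffiths curvature decreases in holomorphic sub-bundles, the equality case of that estimate forces the second fundamental form of $Q$ inside $(V,h)$ to vanish. Hence $Q$ is $h$-parallel with $\Theta(Q, h|_Q) = 0$, i.e.\ unitary flat, and its $h$-orthogonal complement $A$ is then automatically a holomorphic sub-bundle, yielding an orthogonal holomorphic splitting $V = A \oplus Q$. Any quotient of $A$ has degree $\geq 0$ by nefness, and a degree-zero quotient would produce a further flat piece contradicting maximality of $Q$; so by Proposition \ref{ample} the bundle $A$ is ample.

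The hard part is the middle extraction: showing that the maximal degree-zero sub-object of $V$ is simultaneously $h$-parallel and has $L^2$-bounded sections across the singular fibres. This requires combining Zucker's $L^2$-estimates on the Hodge-bundle extension with the equality case of Kawamata's lemma --- a delicate match between differential-geometric positivity inside $(V,h)$ and analytic boundary behaviour --- and it is precisely the content of the missing details supplied in \cite{cd} that complete Fujita's original sketch in \cite{fuj2}.
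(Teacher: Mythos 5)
Your outer strategy coincides with the paper's: reduce to the semistable case via semistable reduction and Proposition \ref{sstablered}, put the Hodge metric on $V$ over the smooth locus, use Griffiths' curvature computation for semipositivity, and use Zucker's logarithmic estimates together with Kawamata's lemma so that degrees are still computed by curvature integrals despite the singular fibres. The gap is in the central extraction step. You define $Q$ as the \emph{maximal polystable subsheaf of degree $0$} and argue that, since ``curvature decreases in sub-bundles'', the equality case forces the second fundamental form of $Q$ in $(V,h)$ to vanish. The inequality goes the wrong way for subsheaves: for a holomorphic subbundle $S\subset (V,h)$ with second fundamental form $\beta$ one has $\Theta_S=\Theta_V|_S-\beta^*\wedge\beta$, hence
$$\deg S=\frac{i}{2\pi}\int_B {\rm tr}\bigl(\Theta_V|_S\bigr)-c\,\|\beta\|_{L^2}^2 ,$$
so $\deg S=0$ balances two terms of opposite sign and forces neither to vanish. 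Concretely, a degree-zero subsheaf of a Griffiths-semipositive (even ample) bundle need not be flat or split off: the nonsplit extension $0\ra \hol_B\ra E\ra L\ra 0$ with $\deg L=1$ on an elliptic curve is ample yet contains the degree-zero subsheaf $\hol_B$, which is certainly not a direct summand. So your $Q$ is not the right object, and the claimed vanishing of its second fundamental form does not follow.

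The paper (and \cite{cd}) runs the argument with \emph{quotients}, which is the correct variance: $V$ is nef, and for a quotient bundle $Q$ of $V$ the curvature formula gives $\deg Q=\frac{i}{2\pi}\int_B {\rm tr}\bigl(\Theta_V \ \mbox{induced on}\ Q\bigr)+c\,\|\sigma\|_{L^2}^2$, a sum of two nonnegative terms (Kawamata's lemma guaranteeing that the integral still computes the degree in the presence of the punctures). Hence a degree-zero quotient forces both terms to vanish, the orthogonal complement of its kernel is then a holomorphic subbundle, and $V$ splits holomorphically and metrically as $A\oplus Q$ with $Q$ unitary flat; taking the maximal such degree-zero quotient (e.g. the minimal-slope Harder--Narasimhan quotient) and noting that any degree-zero quotient of $A$ would again be a degree-zero quotient of $V$, one gets that every quotient of $A$ has positive degree, so $A$ is ample by Proposition \ref{ample}. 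If you replace your subsheaf $Q$ by this quotient and redo the equality-case analysis accordingly, your argument aligns with the paper's proof; as written, the extraction step fails.
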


\subsection{Sketch of proof of Fujita's theorem}\

I)  Thanks to the auxiliary results shown in the previous section,
using the semistable reduction theorem (yielding a base change $B' \ra B$ such that all fibres of the pull-back
$X' \ra B'$ are reduced with normal crossings) and in particular proposition \ref{sstablered}, 
giving a  comparison of  the pull-back of $V$ with the analogously defined $V'$,
it suffices to prove the theorem in the semistable case, i.e., where each fibre is reduced and a normal crossing divisor
 (see proposition 2.9 of \cite{cd} for  details).

II) Idea of the proof in the case of no singular fibres.

$V$ is a holomorphic subbundle of the holomorphic  vector bundle $\sH$  associated to the local
system $\HH_{\ZZ}: = \sR^{m} f_* (\ZZ_X), \ m := \dim(X) - 1$ (i.e.,  $\sH = \HH_{\ZZ} \otimes_{\ZZ} \hol_B$).

The bundle $\sH$ is flat, hence the curvature $\Theta_{\sH} $ associated to the flat connection satisfies 
$\Theta_{\sH} \equiv 0$.

We  view $V$ as a holomorphic subbundle of $\sH$, while  $$V^{\vee} \cong R^m f_* \hol_X, \ m = \dim(X) - 1$$ is a holomorphic quotient bundle of $\sH$.

The curvature formula for subbundles gives ($\sigma$ is the II fundamental form)
$$  \Theta_{V} =  \Theta_{\sH} |_V + \bar{\sigma} \ ^t \sigma = \bar{\sigma} \ ^t \sigma ,$$
and Griffiths  (\cite{griff2}, see also \cite{Griffiths_Topics} and \cite{zucker-remarks}) proves that the curvature of $V^{\vee}$ is semi-negative, 
since its local expression is of the form
$ i h' (z) d \bar{z} \wedge dz $, where $h'(z)$ is a semi-positive definite Hermitian matrix.

In particular we have that  the curvature  $  \Theta_{V} $  of $V$
is semipositive and, 
moreover,  that the curvature  vanishes identically  if and only if the second fundamental form $\sigma$
vanishes identically, i.e., if and only if $V$ is a flat subbundle.

However, by semi-positivity, we get that  the curvature  vanishes identically  if and only its integral, the degree of $V$,
equals zero. Hence $V$ is a flat bundle if and only if  it has degree $0$.

The same result then  holds true, by a similar  reasoning,  for each holomorphic quotient bundle $Q$.

III) The more difficult part of the proof uses some crucial estimates
given by Zucker (using Schmid's asymptotics for Hodge structures) for the growth of the norm of sections of the $L^2$-extension of Hodge bundles, and 
the following  lemma by Kawamata (\cite{kaw1}, see also proposition 3.4, page 11 of  \cite{Peters}).  

\begin{lemma}
Let $L$ be a holomorphic line bundle over a projective curve $B$, and assume that 
$L$ admits a singular metric $h$ which is regular outside of a finite set $S$ and  has at most logarithmic growth at
the points $ p \in S$.

Then the first Chern form $c_1(L,h) := \Theta_h $ is integrable on $B$, and its integral equals $ deg(L)$.
\end{lemma}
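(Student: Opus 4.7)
The plan is to reduce the assertion to the classical Chern--Weil formula via comparison with a smooth reference metric, handling the singular locus $S$ by a Stokes' theorem argument on a shrinking-disk exhaustion. Fix any smooth Hermitian metric $h_0$ on $L$; Chern--Weil gives $\int_B c_1(L,h_0) = \deg(L)$. Writing $h = h_0 \cdot e^{-\phi}$ with $\phi \in C^\infty(B \setminus S)$, one has on $B \setminus S$
\[
c_1(L,h) - c_1(L,h_0) = \frac{i}{2\pi}\,\partial\bar\partial\phi,
\]
so it suffices to show that $\partial\bar\partial\phi$ is globally integrable and that $\int_B \partial\bar\partial\phi = 0$. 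The logarithmic-growth hypothesis on $h$ translates, in a local coordinate $z$ centred at each $p \in S$, into an estimate $|\phi(z)| = O(\log |z|^{-1})$ together with the companion derivative bound $|\bar\partial\phi| = O(1/|z|)$; in the applications motivating the lemma, the finer bound $|\bar\partial\phi| = O\bigl(1/(|z|\log|z|^{-1})\bigr)$ is supplied by Zucker's $L^2$-asymptotics for Hodge bundles.

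The first step is to verify local integrability of $\partial\bar\partial\phi$ near each $p \in S$. Writing $\partial\bar\partial\phi = d(\bar\partial\phi)$ and applying Stokes on an annulus $\epsilon \leq |z| \leq \delta$ expresses the integral of $\partial\bar\partial\phi$ over the annulus as a difference of two boundary integrals which remain uniformly bounded as $\epsilon \to 0$ under the derivative estimate; consequently $\partial\bar\partial\phi \in L^1_{\rm loc}$ near $p$, and together with smoothness outside $S$ this yields integrability of $c_1(L,h)$ on $B$.

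The heart of the argument is Stokes' theorem on the exhaustion $B_\epsilon := B \setminus \bigsqcup_{p\in S} D_\epsilon(p)$:
\[
\int_{B_\epsilon} \frac{i}{2\pi}\,\partial\bar\partial\phi \;=\; -\frac{i}{2\pi}\sum_{p\in S} \int_{|z|=\epsilon} \bar\partial\phi.
\]
By integrability, the left-hand side converges to $\int_B \frac{i}{2\pi}\partial\bar\partial\phi$ as $\epsilon \to 0$. The main obstacle is to show that each boundary integral on the right vanishes in the limit: parametrising $z = \epsilon e^{i\theta}$ one gets
\[
\int_{|z|=\epsilon} \bar\partial\phi \;=\; -i\epsilon \int_0^{2\pi} \phi_{\bar z}(\epsilon e^{i\theta})\,e^{-i\theta}\,d\theta,
\]
whose modulus is merely $O(1)$ under the crude derivative bound. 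This is where the \emph{at most} logarithmic clause is essential: the refined bound $|\bar\partial\phi| = O(1/(|z|\log|z|^{-1}))$ forces the flux to be $O(1/\log\epsilon^{-1}) = o(1)$. Equivalently, decomposing $\phi = a_p \log|z|^2 + \tilde\phi_p$ with $\tilde\phi_p$ bounded, the hypothesis excludes a nonzero $a_p$ (which would correspond to a genuine divisorial pole and thus to a metric on a different line bundle), after which the remaining flux is $o(1)$. Collecting everything gives $\int_B c_1(L,h) = \int_B c_1(L,h_0) = \deg(L)$.
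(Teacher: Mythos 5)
First, a point of reference: the paper itself gives no proof of this lemma; it is quoted from Kawamata (\cite{kaw1}, see also \cite{Peters}), so your argument has to stand on its own. Its skeleton (compare with a smooth metric $h_0$, write $h=h_0e^{-\phi}$, apply Stokes on $B$ minus shrinking discs) is indeed the standard one, but the analytic heart is missing. The hypothesis bounds only the size of the weight $\phi$; it gives \emph{no} pointwise bound on $\bar\partial\phi$. You concede this when you say the needed estimate $|\bar\partial\phi|=O\bigl(1/(|z|\log|z|^{-1})\bigr)$ is ``supplied by Zucker's asymptotics in the applications'': at that moment you are no longer proving the stated lemma from its stated hypotheses, but importing exactly the estimate that makes the flux terms vanish. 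Even the cruder claim $|\bar\partial\phi|=O(1/|z|)$ does not follow from a growth bound on $\phi$. The same issue invalidates your integrability step: uniform boundedness of $\int_{\epsilon\le|z|\le\delta}\partial\bar\partial\phi$ does not give $\partial\bar\partial\phi\in L^1_{\rm loc}$, because this $2$-form has no sign and cancellation can hide infinite mass. A radial weight such as $\phi(z)=\sin\bigl(\log\tfrac{1}{|z|}\bigr)$ is even bounded, yet $\partial\bar\partial\phi$ is not integrable near $0$ and the boundary fluxes oscillate without converging; so nothing in your argument, as written, rules this behaviour out.

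What rescues the lemma in the situation where it is used (and in the proofs of Kawamata and Peters) is a positivity input you never invoke: the curvature of the Hodge metric on the relevant (determinant of a sub- or quotient) bundle is semi-positive. Positivity converts bounded fluxes into finite mass and makes the flux $\int_{|z|=\epsilon}d^c\phi$ essentially monotone in $\epsilon$, so that it can be estimated by differences of \emph{circle averages} of $\phi$ via Green--Jensen, which is precisely what a growth bound on $\phi$ controls -- no pointwise derivative estimate is needed. Relatedly, your final step is backwards: a decomposition $\phi=a_p\log|z|^2+O(1)$ does not follow from logarithmic growth, and if the hypothesis really were $|\phi|=O(\log|z|^{-1})$ it would \emph{allow} $a_p\neq 0$ (take $h=\|s\|_{h_1}^{2a}h_0$ with $s$ vanishing at $p$: the weight is logarithmic, the curvature extends smoothly, and the integral is $\deg L$ shifted by a nonzero multiple of $a$), so the conclusion would fail. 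In this Hodge-theoretic context ``logarithmic growth'' refers to the norms of sections growing at most like powers of $\log|z|^{-1}$ (compare the lemma immediately following in the paper), hence the weight is $O(\log\log|z|^{-1})$, and it is this sub-logarithmic size, combined with semipositivity, that kills the residue at $p$. As it stands, your proof assumes the estimates it should establish and omits the positivity that makes the limiting argument work.
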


The above lemma  shows that in the semistable case singularities are ininfluent, and the argument runs as in the case of no singular fibres.

IV) 
The existence of such a metric follows from the results of Schmid in \cite{schmid} and Zucker in \cite{zucker},
leading to the following lemma.

\begin{lemma}
For each point $s \in B$ there exists a  basis of $V$ given by elements $\sigma_j$ such that their norm 
in the flat metric outside the punctures grows at most logarithmically.

In particular, for each quotient bundle $Q$ of $V$ its determinant admits a metric with growth at most logarithmic at the punctures $ s \in S$,
and the degree of $Q$ is given by the integral of the first Chern form of the singular metric.

\end{lemma}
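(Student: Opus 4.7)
The plan is to reduce the statement to a local computation near each puncture $s\in S$ and to invoke the asymptotic analysis of variations of Hodge structures due to Schmid and its $L^{2}$-refinement due to Zucker. Away from $S$ the bundle $\sH$ is flat of finite rank, so the assertion is trivial there; hence everything concentrates at the finitely many critical values.

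First, I would work on a small punctured disc $\De^{*}$ around a puncture $s\in S$ with coordinate $z$, lifting to the upper half plane via $z=e^{2\pi i\tau}$. Because the fibration is semistable, the local monodromy $T$ around $s$ is unipotent (Landman's monodromy theorem for semistable families), so $N:=\log T$ is nilpotent. For any multivalued flat section $v(\tau)$ of $\sH$ the twisted section $\tilde v(z):=\exp(-\tau N)v(\tau)$ is single-valued; the $\Oh_{\De}$-module generated by such sections is Deligne's canonical extension $\tilde\sH$ of $\sH$ across $s$. Schmid's nilpotent orbit theorem identifies the Hodge filtration on $\sH$ with an approximate nilpotent orbit, and the $\SL_{2}$-orbit theorem gives sharp comparison estimates between the Hodge metric $h$ and the model metric $h_{\infty}$ associated to the weight filtration of $N$; a flat section that is pure of weight $k$ with respect to the limit filtration has Hodge norm comparable to $(-\log|z|)^{(k-m)/2}$.

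Next, I would invoke Zucker's result (the key content of \cite{zucker} and already used in step IV) stating that the Hodge sub-bundle $V=f_{*}\om_{X|B}\subset\sH$ extends, via the prescription that sections are those of $\tilde\sH$ whose Hodge norm is locally $L^{2}$, to a holomorphic sub-bundle $\tilde V\subset\tilde\sH$ of the canonical extension. Choosing a local frame of $\tilde V$ at $s$ and using the comparison with the model metric, each such frame section $\sigma_{j}$ has Hodge norm bounded by a polynomial in $-\log|z|$, which is exactly the required logarithmic growth outside the punctures. This furnishes the basis asserted in the first sentence of the lemma.

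For the second assertion, any holomorphic quotient $Q$ of $V$ inherits, away from $S$, the quotient Hermitian metric from the Hodge metric on $V$. Using the local frame $(\sigma_{j})$ just constructed, the determinant $\det Q$ is locally generated by the projection of a wedge of some of the $\sigma_{j}$'s, whose norm is still dominated by a power of $-\log|z|$; so $\det Q$ carries a singular Hermitian metric $h_{Q}$, smooth on $B\setminus S$ and of at most logarithmic growth at each $s\in S$. Kawamata's lemma (quoted just before in the excerpt) applies to $(\det Q,h_{Q})$ and gives integrability of $c_{1}(\det Q,h_{Q})$ together with the identity $\deg Q=\deg\det Q=\int_{B}c_{1}(\det Q,h_{Q})$, as claimed. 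The main technical obstacle is of course the Schmid--Zucker asymptotic analysis packaged in the $L^{2}$-extension and the norm estimates; once that input is granted the rest is routine bookkeeping.
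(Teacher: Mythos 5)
Your proof sketch is correct and follows exactly the route the paper indicates: the paper offers no detailed argument for this lemma, citing precisely the Schmid asymptotics and Zucker's $L^2$-extension (with the full details deferred to \cite{cd}), which is what you carry out, together with the quotient-metric observation and Kawamata's lemma for the degree identity. Nothing in your argument deviates from or adds a gap to the intended proof.
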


\section{Cyclic coverings of the projective line branched on $4$ points.}\label{sectioncyclic}

In this section we explain how we obtain explicit examples of fibrations where $V = f_* \omega$ has a flat summand.
Let $\zeta_n:=e^{\frac{2\pi i}{n}}.$
Consider a cyclic covering of  the projective line with group $\ZZ/n$, branched on four points. Hence a curve $C=C_x$
described by an equation
\begin{equation}\label{eq51} z_1^n = y_0^{m_0} y_1^{m_1} (y_1 - y_0 )^{m_2}  (y_1 - x y_0 )^{m_3} , \ \  x \in \CC \setminus \{0,1  \},
\end{equation}
where, of course, ${\rm gcd}(m_0,\ldots,m_3,n)=1.$ 

The above equation describes  a singular curve inside the line bundle over  $\PP^1$ whose sheaf of sections is the sheaf $\hol_{\PP^1}(1)$,
and we denote by $C$ the normalization of this curve. Then $C$   admits a Galois cover  $ \phi : C \ra \PP^1$ with cyclic 
Galois group equal to the group of n-th roots of unity in $\CC$, $$ G = \{ \zeta \in \CC^* | \zeta^n = 1\},$$
acting by scalar multiplication on $z_1$.
The choice of a generator in $G$ yields an isomorphism $ G \cong \ZZ/n$, for instance we have 
$ G=\langle \e\rangle$, where  $\e$ acts as $ z_1 \mapsto \zeta_n z_1.$
 The cover $\phi$ is branched at $\sS=\{s_1=0,s_2=1,s_3=x,s_0=\infty\}$ 
 (where, in projective coordinates $[y_0,y_1]$ one has 
   $0=[1,0],\,1=[1,1],\, x=[1,x],\, \infty=[0,1]$).
   
   We shall make the restrictive assumption that $$ 0 < m_j \leq n-3, \ {\rm  and} \ \  m_0 + m_1 + m_2 + m_3= n.$$ 

 \begin{rem}
We want to point out that the above is indeed a restriction, even if we allow a change of the generator of $G$ taking the residue class of a number $h$ coprime to $n$.
This change of generator has the effect of replacing $m_j$ with the rest  modulo $n$ of $ hm_j$, which we denote by $[hm_j]$.

Now, take the example where   $n=8$ and $m_1 = m_2 = 4, m_3 = 3, m_4 = 5$. Then, however we change the generator
of $G$,  the residue class $[h4]$ shall always be equal to $4$. Hence the  sum $[hm_0 ]+ [hm_1] + [hm_2] + [hm_3 ]>  n= 8$ always,
and indeed the sum   is then always equal to $2n = 16$ (observe in fact that $\Sigma:= ([hm_0 ]+ [hm_1] + [hm_2] + [hm_3 ]) \in \{n, 2n. 3n\}$,
and, changing $m_j$ to $n-m_j$, $\Sigma \mapsto 4 n - \Sigma$). 
\end{rem}  
   
Now, for $j\in \ZZ/n\ZZ$, let $\chi_j:G\to \CC^*, \zeta \mapsto \zeta^j$ and let $ \LL_j$ denote the 
rank-one local system on $\PP^1 \setminus \sS$ whose monodromy matrix $\alpha_s$ at $s_i$ is given by $\zeta_n^{m_i\cdot j}.$

 Let $H^1(C,\CC)_j$ be the subspace of $H^1(C,\CC)$ on which $G$ acts as $\chi_j.$ 
 Then one has an isomorphism 
 $$ H^1(C,\CC)_j=H^1(\PP^1 \setminus \sS,\LL_j)$$ and moreover
 $$ H^{1,0}(\PP^1 \setminus \sS,\LL_j)=H^{1,0}(C)_j,$$ 
 where  $H^{1,0}(C)_j$ is again the part of $H^{1,0}(C)$ on which $G$ acts by the character  $\chi_j$
 (cf.~\cite{DeligneMostow}, ~Section~(2.23)). For $j\neq 0,$ one has  $$\dim(H^1(C,\CC)_j)=\dim(H^1(\PP^1 \setminus \sS,\LL_j))=2.$$ 
For $j\neq 0$ let $\mu_{i,j}=\frac{[m_i\cdot j]}{n} .$  By 
\cite{DeligneMostow}, Equation 2.20.1,
\begin{equation}\label{eq2}
 \dim H^{1,0}(\PP^1 \setminus \sS,\LL_j)=-1+\sum_{i=0}^3\mu_{i,j}\quad \textrm{for} \quad j\in \ZZ/n\ZZ, \, j\neq 0 .\end{equation} 
Hence, under the above assumption \eqref{eq51} we have 
$$ H^1(C,\CC)_{-1}=H^{1,0}(C,\CC)_{-1}\simeq H^{1,0}(\PP^1 \setminus \sS,L_{-1}).$$ 

By \cite{DeligneMostow}, Prop. 2.20,
 the Hermitian form $H_j$ on $H^1(\PP^1\setminus \sS,\LL_j)$ 
  is positive definite on $H^{(1,0)}(P\setminus \sS,\LL_j)$ and negative definite on $H^{(0,1)}(\PP^1 \setminus
\sS,\LL_j).$ Hence the positivity and the negativity  index  of $H_j$ are given by 
\begin{equation}\label{eq52} (-1+\sum_{i=0}^3\mu_{i,j}, 3 - \sum_{i=0}^3\mu_{i,j}).\end{equation}

Varying $x\in \CC\setminus \{0,1\},$ one obtains a family of curves $\pi:\mathcal{C}\to \CC\setminus \{0,1\}$
with fibre $\pi^{-1}(x)=C_x,$ equipped  with a compatible action of $G.$ For each $j\in \ZZ/n\ZZ$ one also obtains a local system 
$\LL'_j$ on 
$$M:=\{(x,y)\in \CC^2 \mid x,y\neq 0,1,\, x\neq y\}$$ which extends $\LL_j$ to $M.$ 
 Let 
$$f:M\to \CC\setminus \{0,1\},\, (x,y)\mapsto x.$$ 
The  higher direct image $\hat{\HH}=R^1\pi_*(\CC)$ decomposes then  
with respect to the $G$-action into $\chi_j$-equivariant parts  
$$ \hat{\HH} =\bigoplus_{j\in \ZZ/n\ZZ} \hat{\HH}_j\quad \textrm{where}\quad \hat{\HH}_j=R^1f_*\mathbb{\LL}_j'.$$

If $j\neq 0,$ then the monodromy representation of $\hat{\HH}_j$  by \eqref{eq52} respects  
a Hermitian form $H_j$  of index 
\begin{equation}\label{eq52} (-1+\sum_{i=0}^3\mu_{i,j}, 3 - \sum_{i=0}^3\mu_{i,j}).\end{equation}

We shall prove in the appendix that this monodromy representation is  irreducible if the $m_i$'s are coprime to $n$ (as we shall assume in the sequel).
This result and the following lemma
shall be used to show the existence of a flat unitary summand with infinite monodromy:  but we shall also give a self-contained 
and more elementary proof of the main theorem which only uses the second Fujita theorem.

The index of $H_j$ is related to finiteness properties of the monodromy of $\hat{\HH}_j$ by the following result
which is a straightforward generalization of \cite{BH}~Thm.~4.8 (cf. \cite{haraoka}). We remark that 
the lemma applies in many other contexts, e.g., for more general rigid local systems or motivic local systems 
whose Hodge numbers can be calculated 
(cf. \cite{Ka96}, \cite{drk}, \cite{ds}).

\begin{lemma}\label{lemmafinite} Choose an embedding of $\bar{\QQ}$ into $\CC.$ 
Let $K$ be either a finite abelian extension of $\QQ$ or
a totally real Galois extension of $\QQ.$ Denote by $\Gal:= \Gal(K/\QQ)$ and let $\hol_K$ denote the 
ring of integers of $K.$   
 Let $\Ga$ be a finitely generated group and let $\rho: \Ga \to \GL_n(\hol_K)$ be an absolutely irreducible 
 representation whose image shall be denoted by  $H: =\im(\rho).$ Suppose that $\rho$ respects a Hermitian form, i.e., there exists 
 a Hermitian matrix $M=(m_{i,j})\in K^{n\times n}$ with  
 $$ \bar{A}^TMA=M\quad \forall A\in H.$$  
 For $\sigma \in \Gal,$ let $M^{\sigma}=(m_{i,j}^\sigma).$ 
  Then $H$ is finite if and only if $M^\sigma$ is a definite Hermitian form for all $\sigma\in \Gal.$ 
\end{lemma}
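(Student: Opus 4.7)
My plan is to exploit the standard tension between arithmeticity (discreteness) and preservation of a definite form (compactness), which is the same principle behind the Beukers--Heckman argument.

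For the direction ($\Leftarrow$), I would consider the diagonal embedding
$$\Phi: \GL_n(K) \hookrightarrow \prod_{\sigma\in\Gal} \GL_n(\CC), \qquad A\mapsto (A^\sigma)_\sigma,$$
where $A^\sigma$ means applying $\sigma$ to each entry. Since $\hol_K$ sits as a full lattice in $K\otimes_\QQ\RR$ via the Minkowski embedding, the image $\Phi(M_n(\hol_K))$ is discrete in $\prod_\sigma M_n(\CC)$; consequently $\Phi(H)\subset \Phi(\GL_n(\hol_K))$ is discrete. Next, for each $\sigma$ the representation $\rho^\sigma$ preserves $M^\sigma$: applying $\sigma$ entrywise to $\bar A^T M A = M$ gives $\overline{A^\sigma}^T M^\sigma A^\sigma = M^\sigma$, where I use that complex conjugation on $K$ commutes with every $\sigma$ (trivial if $K$ is totally real, automatic for a CM/abelian $K$). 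Hence $\Phi(H)\subset \prod_\sigma U(M^\sigma)$. If each $M^\sigma$ is definite, then each $U(M^\sigma)$ is compact, so $\Phi(H)$ is a discrete subgroup of a compact group and thus finite; therefore $H$ is finite.

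For the direction ($\Rightarrow$), suppose $H$ is finite, and fix any $\sigma\in\Gal$. Then $\rho^\sigma(\Gamma)$ is a finite subgroup of $\GL_n(\CC)$, and averaging any fixed positive-definite Hermitian form over this finite image yields a $\rho^\sigma$-invariant positive-definite Hermitian form $P_\sigma$. Since $\rho$ is absolutely irreducible and this property is preserved by the base change $K\otimes_{K,\sigma}\CC$, the representation $\rho^\sigma$ is also absolutely irreducible. By a Schur-type argument the space of $\rho^\sigma$-invariant sesquilinear forms is at most one-dimensional over $\CC$, so $M^\sigma = c\cdot P_\sigma$ for some $c\in\CC^\ast$; Hermiticity of both sides forces $c\in\RR^\ast$, and therefore $M^\sigma$ is definite.

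The main technical point I expect to scrutinise is the behaviour of complex conjugation under each $\sigma\in\Gal$: one must verify that for $K$ totally real or for $K$ an abelian (hence CM, if not real) extension of $\QQ$, complex conjugation is a central element of $\Gal$, so that applying $\sigma$ entrywise to the Hermitian invariance relation indeed produces Hermitian invariance for $M^\sigma$ with respect to ordinary complex conjugation on $\CC$. The remaining ingredients---the Minkowski lattice property of $\hol_K$, the compactness of unitary groups of definite forms, the existence of an invariant positive-definite form for finite groups, and the uniqueness (up to scalar) of an invariant sesquilinear form for an absolutely irreducible representation---are all standard, and no further input is needed.
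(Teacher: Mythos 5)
Your proof is correct, and the forward direction ($H$ finite $\Rightarrow$ all $M^\sigma$ definite) is exactly the paper's argument: average to get an invariant positive definite form for the finite conjugate image, then use absolute irreducibility of $\rho^\sigma$ and Schur to conclude $M^\sigma$ is a real nonzero multiple of it, hence definite; both arguments rest on the same observation that complex conjugation restricts to a central element of $\Gal$ under the hypotheses on $K$. The converse is where you genuinely diverge in implementation: the paper uses the additive isomorphism $\hol_K\simeq \ZZ^d$ to build an integral representation $\tilde{\rho}:\Ga\to \GL_{nd}(\ZZ)$, identifies $\tilde{\rho}\otimes\CC$ with $\prod_{\sigma\in\Gal}\rho^\sigma\otimes\CC$ via a trace computation and Brauer--Nesbitt, and then intersects a conjugate of the compact group $\prod_\sigma U(M^\sigma)$ with the discrete group $\GL_{nd}(\ZZ)$; you instead embed $\GL_n(\hol_K)$ diagonally into $\prod_{\sigma\in\Gal}\GL_n(\CC)$ and use the discreteness of $\hol_K$ under the totality of its archimedean embeddings (Minkowski) to see that the image of $H$ is a discrete subgroup of the compact group $\prod_\sigma U(M^\sigma)$, hence finite. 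The two routes run on the same engine (``discrete $\cap$ compact $=$ finite'' applied to all Galois conjugates at once), but yours is more direct: it avoids constructing $\tilde{\rho}$, avoids the semisimplicity/trace identification via Brauer--Nesbitt, and only needs the standard lattice property of $\hol_K$, at the cost of nothing; the paper's formulation, on the other hand, makes explicit the integral $\ZZ$-structure of the restricted representation, which is sometimes convenient in arithmetic applications. One small point of care in your write-up: justify that $\Phi(M_n(\hol_K))$ is closed and discrete (it is an additive lattice in the real subspace $M_n(K\otimes_\QQ\RR)$ of $\prod_\sigma M_n(\CC)$), so that its intersection with the compact set $\prod_\sigma U(M^\sigma)$ is finite; and note that $M^\sigma\neq 0$, which is what makes the Schur scalar $c$ nonzero in the forward direction.
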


\begin{proof} If $H$ is finite then $H$ leaves the positive definite unitary form
$$ \bar{v}^TMw:=\sum_{h\in H} \bar{v}^T\bar{h}^Thw\quad v,w\in K^n $$ invariant. By our assumptions on $K,$
any $\sigma\in \Gal$ commutes with  complex conjugation, hence $H^\sigma$ leaves the form 
defined by the matrix $M^\sigma$ invariant. Moreover, $M^\sigma$ is  determined up to a constant, since 
$H^\sigma$ is again irreducible. Since $H^\sigma$ is also finite, the matrix $M^\sigma$ must be definite.

 Let now  the form $M^\sigma$ be definite for any $\sigma \in \Gal.$ By the additive isomorphism 
$\hol_K\simeq \ZZ^d\, (d=|\Gal|),$ 
the representation $\rho$ gives rise to a representation 
$$\tilde{\rho}:\Ga\to \GL_{nd}(\ZZ)$$ such that he trace of $\tilde{\rho}(g)$ coincides with the 
the relative trace of $\rho(g):$ 
$$ {\rm Trace}(\tilde{\rho}(g))={\rm Trace}_{K/\QQ}({\rm Trace}(\rho(g))=\sum_{\sigma\in \Gal} {\rm Trace}(\rho(g))^\sigma.$$ 
Extending the scalars from $\ZZ$ to $\CC$ we obtain from $\tilde{\rho}$ a representation
$$\tilde{\rho}\otimes \CC:\Ga\to \GL_{nd}(\CC).$$
Since any semisimple representation with values in $\CC$ is  determined up to isomorphy by its trace 
by the theorem of Brauer-Nesbitt, there exists  a matrix $g\in \GL_n(\CC)$ such that 
$$ (\tilde{\rho}\otimes\CC)^g=\prod_{\sigma\in \Gal}\rho^\sigma\otimes \CC,$$
where  $\rho^\sigma\otimes \CC$ denotes the extension of scalars of $\rho^\sigma$ from $\hol_K$ to $\CC.$ 
By the definiteness of the forms $M^\sigma\, (\sigma \in \Gal),$
the latter representation takes its values in the product $\prod_{\sigma\in \Gal}U(M^\sigma)$ 
of the compact unitary groups
$U(M^\sigma)$  associated to 
the Hermitian forms $M^\sigma\, (\sigma \in \Gal).$ We conclude that the image of $\tilde{\rho}$ is contained
in the compact and discrete group $$\left(\prod_{\sigma\in \Gal}U(M^\sigma)^{g^{-1}}\right) \cap \GL_{nd}(\ZZ)$$
and is hence finite. Therefore the image of $\rho$ is finite.
\end{proof}

The next result is a straightforward consequence of the above lemma (of course, it may also be derived by 
Schwarz' list of hypergeometric differential equations with finite monodromy \cite{Schwarz}). For simplicity, we restrict ourselves 
to the cases considered in the next section.

\begin{cor}\label{corinfinite} Assume that 
$$  n \in \NN,   n \geq 5, {\rm such \  that} \ GCD (n,6) = 1, \ m_0, m_1, m_2, m_3 \in \NN,  $$
$$ {\rm with} \ 1 \leq   m_j \leq n-1, \  \  m_0 +  m_1 +  m_2 +  m_3 = n, $$
$$ m_k,    (m_i +  m_3 )\in  (\ZZ / n\ZZ)^*, \forall i=0,1,2, k = 0,1,2,3.$$
Then there is $j\in (\ZZ/n\ZZ)^\ast.$ such that the  monodromy of the   local system $\hat{\HH}_j$  is infinite.  
\end{cor}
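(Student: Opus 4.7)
My plan is to apply Lemma~\ref{lemmafinite} to the monodromy representation of $\hat{\HH}_j$, which takes values in $\GL_2(\hol_K)$ for $K=\QQ(\zeta_n)$ and preserves the Hermitian form $H_j$. The Galois group $\Gal(K/\QQ)$ is identified with $(\ZZ/n\ZZ)^\ast$ via $\sigma_h:\zeta_n\mapsto\zeta_n^h$. Since the monodromy matrices of $\hat{\HH}_j$ have entries in $\ZZ[\zeta_n]$ built from the powers $\zeta_n^{m_i j}$, applying $\sigma_h$ to them yields the monodromy of $\hat{\HH}_{hj}$, and hence $H_j^{\sigma_h}$ has the same signature as $H_{hj}$. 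By \eqref{eq52} that signature is $(s(k)-1,\,3-s(k))$ for $k=hj$, where
\[ s(k):=\frac{1}{n}\sum_{i=0}^{3}[m_i k]\in\{1,2,3\}. \]
Thus $H_k$ is definite iff $s(k)\in\{1,3\}$, and by Lemma~\ref{lemmafinite} the monodromy of $\hat{\HH}_j$ is finite iff $s$ avoids the value $2$ on all of $(\ZZ/n\ZZ)^\ast$. It therefore suffices to exhibit a single unit $k$ with $s(k)=2$.

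The next step is a monotonicity lemma. Expanding $[m_i(k+1)]$ as $[m_i k]+m_i$ when $[m_i k]+m_i<n$ and as $[m_i k]+m_i-n$ otherwise, and summing over $i$, one obtains
\[ s(k+1)-s(k)=1-c(k),\qquad c(k):=\#\{i:[m_i k]+m_i\geq n\}\in\{0,1,2,3,4\}. \]
In particular $s(k+1)\leq s(k)+1$, so $s$ can increase by at most one at each step. Combined with $s(1)=\tfrac{1}{n}\sum m_i=1$ and $s(n-1)=4-s(1)=3$ (the antisymmetry $s(n-k)=4-s(k)$ coming from $[-m_i k]=n-[m_i k]$), this forces some $k^\ast\in\{2,\ldots,n-2\}$ with $s(k^\ast)=2$.

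The main obstacle is refining $k^\ast$ to lie in $(\ZZ/n\ZZ)^\ast$; this is where the hypotheses $\gcd(n,6)=1$ and the coprimality of the $m_k$ and of the $m_i+m_3$ must be used. The antisymmetry yields $|s^{-1}(1)|=|s^{-1}(3)|$ inside $\{1,\ldots,n-1\}$, so $|s^{-1}(2)|=n-1-2|s^{-1}(1)|$, and the sought conclusion reduces to the inequality $|s^{-1}(1)|<\phi(n)/2$, which would make $|s^{-1}(2)|$ exceed the number $n-1-\phi(n)$ of non-units in $\{1,\ldots,n-1\}$ and thereby force a coprime $k$ with $s(k)=2$. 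An element $k\in s^{-1}(1)$ produces a 4-tuple of residues $([m_0 k],\ldots,[m_3 k])$, all lying in $(\ZZ/n\ZZ)^\ast$, summing to $n$, and related to $(m_0,\ldots,m_3)$ by a single scalar $k$; the coprimality of the pairwise sums $m_i+m_j$ together with $\gcd(n,6)=1$ (so that every prime factor of $n$ is $\geq 5$) controls the number of such scalars. I expect this combinatorial/partition bound, which implicitly encodes the avoidance of the tetrahedral, octahedral, and icosahedral cases of Schwarz's list, to be the most delicate step. Once a coprime $k$ with $s(k)=2$ has been produced, Lemma~\ref{lemmafinite} applied with $\sigma=\mathrm{id}$ to $\hat{\HH}_k$ immediately delivers the infinite monodromy.
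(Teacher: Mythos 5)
Your reduction is the right one, and its first half agrees with the paper: by \eqref{eq52} the form $H_k$ is indefinite exactly when $s(k):=\tfrac1n\sum_i[m_ik]=2$, so by Lemma~\ref{lemmafinite} (applied with $\sigma=\mathrm{id}$, and together with the irreducibility of $\hat{\HH}_k$ from Prop.~\ref{propapp}, which you should cite since absolute irreducibility is a hypothesis of the lemma) it suffices to produce a single unit $k\in(\ZZ/n\ZZ)^\ast$ with $s(k)=2$. The gap is in how you propose to produce that unit. You leave the key step open (``the most delicate step''), and the sufficient inequality you reduce it to, $|s^{-1}(1)|<\phi(n)/2$, is in fact false under the stated hypotheses: take $n=385=5\cdot 7\cdot 11$ and the standard choice $(m_0,m_1,m_2,m_3)=(1,1,1,382)$, which satisfies all assumptions since $382=2\cdot 191$ and $m_i+m_3=383$ is prime. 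Here $s(k)=1$ exactly when $3k<n$, so $|s^{-1}(1)|=\lfloor 384/3\rfloor=128$, while $\phi(385)/2=120$. So the counting route cannot work as stated (the conclusion of the corollary is of course still true in this example), and no amount of refinement of the ``avoid Schwarz's list'' heuristic is supplied that would close it.

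The paper closes this step in one line, and this is precisely where the hypothesis $m_i+m_3\in(\ZZ/n\ZZ)^\ast$ enters, which your argument never actually uses: since $m_0+m_3$ is a unit, choose $j$ with $j(m_0+m_3)\equiv -1 \pmod n$ (so $j$ is automatically a unit). Then $[m_0j]+[m_3j]\equiv n-1 \pmod n$ and lies in $\{2,\dots,2n-2\}$, hence equals $n-1$; since $[m_1j]+[m_2j]\in\{2,\dots,2n-2\}$ and the total is divisible by $n$, one gets $\sum_i[m_ij]=2n$, i.e.\ $s(j)=2$, and Lemma~\ref{lemmafinite} together with Prop.~\ref{propapp} gives infinite monodromy. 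If you want to salvage your counting approach you would need a genuinely different bound; as written, the proposal is incomplete at its only nontrivial point.
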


\begin{proof} Since $n-1$ and $m_0+m_3$ are    invertible in $\ZZ/n\ZZ,$  there is a $j$ such that $j (m_0+m_3) \equiv -1 \ (mod \ n)$.

Define now $m'_i : = [ m_i j]$, so that $m'_0+m'_3= n-1$.

We have 
the obvious inequalities $  2\leq m'_1+m'_2\leq 2n-2.$ Hence 
$$ n+1\leq m'_0+\cdots+m'_3\leq 3n-3$$ and therefore 
$$  m'_0+\cdots+m'_3= 2n.$$ 
Hence the underlying unitary form is indefinite by Formula~\eqref{eq52},
hence the monodromy is infinite by Lemma \ref{lemmafinite} and Prop.~\ref{propapp}.
\end{proof}

\section{Abelian coverings of $\PP^1 \times \PP^1$ yielding surfaces which are counterexamples to Fujita's question}

In this section we shall provide an infinite series of  examples of surfaces fibred over a curve, whose fibres are curves with a symmetry of $ G : = \ZZ/n$
( and with quotient $\PP^1$).

To avoid too many technicalities, we make the following simplifying assumptions, part of which were already mentioned in corollary \ref{corinfinite}:

$$  n \in \NN,   n \geq 5, {\rm such \  that} \ GCD (n,6) = 1, \ m_0, m_1, m_2, m_3 , n_0, n_1, n_2 \in \NN,  $$
$$ {\rm with} \ 1 \leq  n_i, m_j \leq n-1, \  \  m_0 +  m_1 +  m_2 +  m_3 = n, n_0 +  n_1 +  n_2  = n$$
$$ m_j,   n_i,  (m_i +  m_3 )\in  (\ZZ / n)^*, \forall i=0,1,2, j = 0,1,2,3.$$

\begin{lemma}
Such integers $n_i, m_j$ satisfying the above properties exist if and only if $n$ and $6$ are coprime.
\end{lemma}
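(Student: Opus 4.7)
The plan is to prove the two directions of the biconditional separately: necessity by reducing the arithmetic constraints modulo $2$ and modulo $3$, sufficiency by exhibiting an explicit solution.

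For necessity, I would observe first that if $2\mid n$, then each $n_i \in (\ZZ/n)^\ast$ must be odd, so $n_0+n_1+n_2$ is odd, contradicting the hypothesis $n_0+n_1+n_2 = n$. Next, if $3\mid n$, then each $m_j$ is coprime to $3$, so $m_j \bmod 3 \in \{1,2\}$; the constraint $(m_i+m_3) \in (\ZZ/n)^\ast$ for $i=0,1,2$ forces $m_i+m_3 \not\equiv 0 \pmod 3$, hence (since only the residues $1$ and $2$ are available) $m_i \equiv m_3 \pmod 3$ for $i=0,1,2$. Therefore all four $m_j$ share a common residue $a \in \{1,2\}$ modulo $3$, giving $m_0+m_1+m_2+m_3 \equiv 4a \equiv a \not\equiv 0 \pmod 3$, contradicting $\sum_j m_j = n \equiv 0 \pmod 3$.

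For sufficiency, assuming $\gcd(n,6)=1$ and $n \geq 5$, I would give the explicit choice
$$ n_0 = n_1 = 1,\quad n_2 = n-2,\quad m_0 = m_1 = m_2 = 1,\quad m_3 = n-3. $$
Here $\gcd(n-2,n)$ divides $2$ and hence equals $1$ because $n$ is odd; $\gcd(n-3,n)$ divides $3$ and hence equals $1$ because $3\nmid n$; and $m_i+m_3 = n-2$ for $i=0,1,2$ is coprime to $n$ by the first computation. The range inequalities $1 \leq n_i, m_j \leq n-1$ and the sum identities $n_0+n_1+n_2 = n$, $m_0+m_1+m_2+m_3 = n$ are immediate from the definitions.

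The only step of any real substance is the modulo-$3$ analysis in the necessity part; once the shared-residue observation is made the contradiction is automatic, and the explicit construction on the other side is routine. I do not expect any serious obstacle.
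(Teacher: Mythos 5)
Your proof is correct and follows essentially the same route as the paper: necessity by congruence arguments modulo $2$ and modulo $3$, and sufficiency via the identical explicit choice $m_0=m_1=m_2=1$, $m_3=n-3$, $n_0=n_1=1$, $n_2=n-2$. The only cosmetic differences are that you derive the mod-$2$ contradiction from the $n_i$'s rather than from $m_i+m_3$, and the mod-$3$ contradiction from the sum $\sum_j m_j=n$ rather than from the unit condition on some $m_i+m_3$; both variants are equally valid.
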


\begin{proof}
If $n$ is even, then if $m_j$ is a unit in $\ZZ/n$, then $m_j$ is odd, but then $m_i + m_3$ is even,
and cannot be a unit.

If instead $ 3 | n$, then without loss of generality $$m_0 \equiv m_3 \ (mod \ 3),   m_1 \equiv m_2 \equiv -m_3 \ (mod \ 3), $$
but then $m_1 + m_3$  is not a unit in $\ZZ/n$.

Finally, if $GCD (n,6) = 1$, then we can simply choose 
$$  m_0= m_1=  m_2= 1, \ m_3 =  n-3,  n_0= n_1 = 1 , n_2 = n-2.$$

\end{proof}

\begin{definition}
We shall refer to the choice $$  m_0= m_1=  m_2= 1, \ m_3 =  n-3,  n_0= n_1 = 1 , n_2 = n-2$$
as the {\bf standard case}.
\end{definition}

 We consider again the equation 
$$ z_1^n =  y_0 ^{m_0} y_1^{m_1}  (y_1 - y_0 )^{m_2}   (y_1 - x y_0 )^{m_3}  , \ \  x \in \CC \setminus \{0,1  \}$$
but we homogenize it to obtain the equation

$$ z_1^n = y_0 ^{m_0} y_1^{m_1}  (y_1 - y_0 )^{m_2}   (x_0 y_1 - x_1 y_0 )^{m_3}     x_0^{n-m_3}. \  $$

The above equation describes a singular surface $\Sigma'$ which is a cyclic covering of 
 $\PP^1 \times \PP^1$ with group $ G : = \ZZ / n$; $\Sigma'$ 
is contained inside the line bundle $\LL_1$  
over $\PP^1 \times \PP^1$  whose sheaf of holomorphic sections $\sL_1$ equals $ \hol_{\PP^1 \times \PP^1}(1,1)$.

The first  projection $\PP^1 \times \PP^1 \ra  \PP^1$ induces a morphism $p : \Sigma' \ra  \PP^1$
and we consider the curve $B$, normalization of the covering of $\PP^1$ given by
$$ w_1^n =  x_0 ^{n_0} x_1^{n_1}  (x_1 - x_0 )^{n_2}  .$$ 

We consider the normalization $\Sigma$ of the  fibre product $\Sigma' \times_{\PP^1} B$. 

$\Sigma$ is an abelian covering of $\PP^1 \times \PP^1$ with group $(\ZZ / n)^2$, and the local monodromies are as follows:

$$ \{ y = \infty \} = \{ y_0=0\} \mapsto (m_0,0) , \ \{ y = 0 \} = \{ y_1=0 \} \mapsto (m_1,0), \ $$
$$\{ y = 1 \} = \{ y_1=y_0 \} \mapsto (m_2,0),$$ 
$$ \{ x = \infty \} \mapsto (n- m_3,n_0) , \ \{ x = 0 \} \mapsto (0, n_1), \ \{ x = 1 \} \mapsto (0, n_2),$$
$$\Delta := \{  (x_0 y_1 - x_1 y_0 ) = 0\}  \mapsto (m_3,0).$$

Since the branch divisor is a not a normal crossing divisor, we blow up the three points  $ P_0 : = \{ x_0 = y_0 = 0\}$,
 $ P_1 : = \{ x_1 = y_1 = 0\}$,  $ P_2 = \{ x_0 -x_1 = y_0 - y_1= 0\}$.
 
We obtain in this way a del Pezzo surface which we denote by $Z$, we denote by $E_i$ the exceptional $(-1)$-curve 
inverse image of the point $P_i$, and we notice that the pull back of the branch divisor is now  a normal crossing divisor.

The local monodromies around the three exceptional divisors are now

$$ E_0 \mapsto (m_0, n_0), \  E_1 \mapsto (m_1 + m_3, n_1), E_2 \mapsto (m_2 + m_3, n_2).$$

We finally define $S$ to be the normalization of the pull back  $\Sigma \times_{  \PP^1 \times \PP^1}Z$.

\begin{prop}\label{p1}
The surface $S$ is smooth, and each irreducible component of the branch locus $\sB$ has local monodromy of order $n$.

\end{prop}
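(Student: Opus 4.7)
The plan is to use the standard description of an abelian cover of a smooth surface: if $\pi\colon S\to Z$ is the normalisation of the $G$-cover with $G=(\ZZ/n)^2$, determined by the branch divisor $\mathcal{B}=\sum_iD_i$ (simple normal crossing, by construction after the three blow-ups) together with the monodromies $g_i\in G$ along each $D_i$, then $S$ is smooth if and only if (a) each $g_i$ has order exactly $n$ (the ramification index along $D_i$), and (b) at every node $D_i\cap D_j$ the elements $g_i,g_j$ form a $\ZZ/n$-basis of $G$, equivalently $\det(g_i,g_j)\in(\ZZ/n)^*$. The proposition thus reduces to verifying (a) and (b) against the ten-component list of monodromies displayed just before its statement.

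For (a) one uses that the order of $(a,b)\in(\ZZ/n)^2$ equals $n/\gcd(a,b,n)$, so the requirement is $\gcd(a,b,n)=1$. For the seven components inherited from $\PP^1\times\PP^1$ (three horizontals, three verticals, and $\Delta$) one coordinate vanishes and the other lies in $\{m_0,m_1,m_2,m_3,n_0,n_1,n_2,n-m_3\}\subset(\ZZ/n)^*$ by the standing hypotheses of Section~\ref{sectioncyclic}. For the exceptional divisors $E_0,E_1,E_2$ the first coordinates are $m_0$, $m_1+m_3$, $m_2+m_3$, which are also units by hypothesis, so each $g_{E_i}$ has order $n$.

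For (b) I would enumerate the fifteen nodes of $\mathcal{B}$ on $Z$: six of the form $\tilde y_a\cap\tilde x_b$ with $(a,b)\notin\{(\infty,\infty),(0,0),(1,1)\}$ (the remaining three being replaced by exceptional curves), plus three each of the three types $\tilde y_{a(i)}\cap E_i$, $\tilde x_{b(i)}\cap E_i$, and $\tilde\Delta\cap E_i$ for $i=0,1,2$. For each node I compute the $2\times 2$ determinant of the two local monodromies; a direct inspection shows the outcome is always of the form $\pm m_an_b$, $\pm n_i(m_j+m_3)$, or $\pm m_3 n_i$, each a unit by the hypotheses on $m_j$, $n_i$, and $m_i+m_3$. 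The only case where a small cancellation enters is $\tilde x_\infty\cap E_0$, where $(n-m_3,n_0)$ and $(m_0,n_0)$ give determinant $-n_0(m_0+m_3)$.

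The main ``obstacle'' is thus merely the bookkeeping in step (b); the arithmetic assumptions at the start of this section were engineered precisely so that every such determinant is a unit, and no substantive difficulty arises.
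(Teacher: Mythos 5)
Your proposal is correct and takes essentially the same route as the paper's own proof: reduce smoothness of the $(\ZZ/n)^2$-cover to the two conditions that each local monodromy has order $n$ and that at every node of $\sB$ the two monodromy vectors generate (equivalently, form a basis of, equivalently have unit determinant in) $(\ZZ/n)^2$, and then check the ten components and the nodes, with the only nontrivial case being $\tilde x_\infty \cap E_0$, where the determinant $-n_0(m_0+m_3)$ is a unit --- exactly the special pair $(n-m_3,n_0),(m_0,n_0)$ singled out in the paper. The only small slip is your claim that for the seven components inherited from $\PP^1\times\PP^1$ one coordinate of the monodromy vanishes: this fails for $\{x=\infty\}$, whose monodromy is $(n-m_3,n_0)$, but the order is still $n$ since $n_0$ is a unit, so the argument is unaffected.
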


\begin{proof}
Given two irreducible components $\sB_i$, $\sB_j$, they are smooth and they intersect transversally in exactly one
point , or are  disjoint. Hence it is sufficient to show 

\begin{itemize}
\item
that the inertia subgroups (image of the local monodromy) are cyclic of order $n$
\item
if $\sB_i$, $\sB_j$ intersect, the corresponding inertia subgroups generate $(\ZZ / n)^2$.

\end{itemize}

By our assumptions all the local monodromies are elements of order $n$, hence the first assertion.

The second assertion follows from  the following fact: $(\ZZ / n)^2$ is  generated by pairs of the form

$$ (a, 0) , (0,b), a,b \in (\ZZ / n)^*,$$ 
$$ (a, 0) , (c,b), a,b \in (\ZZ / n)^*,$$
$$ (0, a) , (b,c), a,b \in (\ZZ / n)^*,$$

or of the  form  
$$ (n-m_3, n_0) , (m_0, n_0),$$
since their span is the span of $ (n-m_3-m_0, 0) , (m_0, n_0),$
and $n_0, m_3 + m_0$ are units in $(\ZZ / n)$.

\end{proof}

\begin{prop}\label{p2}
Let $f : S \ra B$ be the morphism induced by the projection $\Sigma \ra B$. Then

\begin{enumerate}
\item
the genus of the fibres $F$ equals $g = n-1$
\item
the genus of the base curve $B$ equals $b = \frac{n-1}{2}$
\item
all the fibres are smooth, except the fibres over $x=0, x=1, x=\infty$, which consists of two smooth curves
of genus $b$ intersecting transversally in exactly one point.
\item
$f$ is the Albanese map of $S$, i.e., $b = q : = h^1 (\hol_S) = h^1 (\hol_S (K_S))$.

\end{enumerate}

\end{prop}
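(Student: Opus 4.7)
The plan is to prove the four claims in order. Parts (1) and (2) reduce to Riemann--Hurwitz. The generic fibre of $f$ is the cyclic $n$-cover $C_x\to\PP^1_y$ branched at $y=\infty,0,1,x$ with weights $m_0,\ldots,m_3$, each a unit in $\ZZ/n$, so each branch point is totally ramified; Riemann--Hurwitz yields $2g-2=-2n+4(n-1)=2n-4$ and $g=n-1$. Likewise $B\to\PP^1_x$ is branched fully at three points with unit weights $n_0,n_1,n_2$, so $2b-2=-2n+3(n-1)=n-3$ and $b=(n-1)/2$.

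For (3), I will treat the fibre over $x=\infty$; the cases $x=0,1$ are entirely symmetric. The fibre of $Z\to\PP^1_x$ over $\infty$ is $C_1\cup E_0$, with $C_1$ the strict transform of $\{x_0=0\}$ and $E_0$ the exceptional divisor of $P_0$, meeting transversally at a single point $P$. Both are branch components of $\pi:S\to Z$, with inertia subgroups $H_{C_1}=\langle(n-m_3,n_0)\rangle$ and $H_{E_0}=\langle(m_0,n_0)\rangle$, each cyclic of order $n$ (since $n_0$ is a unit). Under our hypotheses $H_{C_1}\cap H_{E_0}=0$ and $H_{C_1}+H_{E_0}=(\ZZ/n)^2$ (using that $m_0+m_3$ is a unit). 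The local theory of abelian Galois covers identifies $\pi^{-1}(C_1)\to C_1\cong\PP^1$ with a cyclic $G/H_{C_1}\cong\ZZ/n$-cover ramified at the three points where $C_1$ meets the other branch components $\{y_1=0\},\{y_1=y_0\},E_0$; using the quotient homomorphism $(a,b)\mapsto n_0a+m_3b$ with kernel $H_{C_1}$, the induced local monodromies there are $n_0m_1,\,n_0m_2,\,n_0(m_0+m_3)$, each a unit in $\ZZ/n$ and summing to $n_0\cdot n\equiv 0$. Thus the cover is connected and totally ramified at three points, so Riemann--Hurwitz yields genus $(n-1)/2=b$. A parallel computation for $E_0$ (branch points $\{y_0=0\},\Delta,C_1$ with induced monodromies $n_0m_0,\,n_0m_3,\,-n_0(m_0+m_3)$, all units) gives another smooth curve of genus $b$. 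At $P$ the total inertia is $G$, so $\pi^{-1}(P)$ is a single point; the splitting $G=H_{C_1}\oplus H_{E_0}$ provides local analytic coordinates on $S$ realising the two preimage curves as the coordinate axes, giving a transverse intersection.

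For (4), the pullback $f^{\ast}:H^0(\Omega^1_B)\hookrightarrow H^0(\Omega^1_S)$ shows $q(S)\geq b$. For the reverse inequality I will use the Leray five-term sequence
\[ 0\to H^1(B,\QQ)\to H^1(S,\QQ)\to H^0(B,R^1f_\ast\QQ)\to H^2(B,\QQ)\to H^2(S,\QQ), \]
in which the last map $f^{\ast}$ sends the fundamental class of $B$ to the nonzero fibre class $[F]$ and is hence injective. By parts (1)--(3) and Mayer--Vietoris, every fibre of $f$ has $h^1=2g=4b$, so the restriction of $R^1f_\ast\QQ$ to $B^\circ:=B\setminus\{0,1,\infty\}$ is a local system of rank $4b$. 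Under the fibrewise $\ZZ/n$-action it decomposes, after tensoring with $\bar\QQ$, as $\bigoplus_{j\in(\ZZ/n)^\ast}\hat\HH_j$ with each $\hat\HH_j$ of rank $2$. By Prop.~\ref{propapp} each $\hat\HH_j$ is irreducible, hence has no invariants, so $H^0(B^\circ,R^1f_\ast\QQ|_{B^\circ})=0$ and \emph{a fortiori} $H^0(B,R^1f_\ast\QQ)=0$. Therefore $q(S)=b$, $\mathrm{Alb}(S)\cong J(B)$, and the Albanese morphism factors as $S\xrightarrow{f}B\xrightarrow{j}J(B)$, giving (4).

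The main obstacle will be (4): parts (1)--(3) are essentially mechanical once the unit hypotheses on the $m_i$ and $n_i$ are in place, but (4) rests on the irreducibility of the rank-2 local systems $\hat\HH_j$, which is furnished by the appendix. A secondary concern is the careful index bookkeeping in (3)---choosing the right identification $G/H\cong\ZZ/n$ and verifying that each induced monodromy is a unit---since a slip here would undermine both the connectedness of the preimage and the genus computation.
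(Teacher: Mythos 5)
Your parts (1)--(3) follow the paper's own route: Hurwitz for totally ramified $\ZZ/n$-covers of $\PP^1$ in (1)--(2), and for (3) the identification of the special fibres of $Z\to\PP^1$ as two branch components meeting in one point over which the cover is totally branched, each component carrying an induced cyclic cover of $\PP^1$ totally ramified at three points. Your inertia-group and quotient-homomorphism bookkeeping is a correct, more explicit version of what is done in Propositions \ref{p1} and \ref{p2}. Part (4) is where you genuinely diverge: the paper computes $q$ coherently, writing $q=h^0(B,\sR^1f_*(\omega)(K_B))+h^1(f_*(\omega)(K_B))$, obtaining $b$ from the first term by relative duality and killing the second via Fujita's splitting $V=A\oplus Q$, Serre duality, ampleness of $A$, and irreducibility of the monodromy (so $Q$ has no trivial summand); you instead use the topological Leray five-term sequence and kill $H^0(B,R^1f_*\QQ)$ directly by irreducibility of the eigen-local systems $\hat{\HH}_j$ from the appendix. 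Your route avoids invoking Fujita's second theorem, while the paper's argument deliberately runs through the bundle $V=A\oplus Q$ that the article is about; both ultimately rest on Proposition \ref{propapp}.

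Two points in your (4) need patching. First, the invariants you must kill are those of $\pi_1(B^\circ)$ acting on the pullback of $\hat{\HH}_j$ under the degree-$n$ \'etale cover $B^\circ\to\PP^1\setminus\{0,1,\infty\}$, whereas Proposition \ref{propapp} gives irreducibility only for $\pi_1(\PP^1\setminus\{0,1,\infty\})$; irreducibility does not pass to subgroups, so ``irreducible, hence no invariants'' is not valid as written. It is easily repaired: the image of $\pi_1(B^\circ)$ is a normal subgroup with quotient $\ZZ/n$, so the invariant subspace is stable under the full monodromy group, and it cannot be all of the rank-$2$ irreducible $\hat{\HH}_j$ (the representation would then factor through the abelian quotient), hence it is zero. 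Second, the character decomposition runs over all nonzero $j\in\ZZ/n$, each of rank $2$, not only over units (otherwise the ranks do not sum to $2g$ for composite $n$); Proposition \ref{propapp} does cover all such $j$ under the standing assumption that the $m_i$ are coprime to $n$. Your ``a fortiori'' passage from $B^\circ$ to $B$ also deserves a word: here the vanishing cycles are separating, so the local monodromies on $H^1(\cdot,\QQ)$ are trivial and $R^1f_*\QQ$ is a local system on all of $B$ (a fact the paper records in the proof of the main theorem), which makes the cospecialization maps injective and justifies the step.
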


\begin{proof}
(1) and (2) follow from Hurwitz' formulae:
$$ 2(g-1) = -2n + 4 (n-1), \  2(b-1) = -2n + 3 (n-1),$$
since the general fibre is a $(\ZZ / n)$-cyclic cover of $\PP^1$ totally ramified in 4 points,
while $B$ is a $(\ZZ / n)$-cyclic cover of $\PP^1$ totally ramified in 3 points.

(3): the fibres over $x=0, x=1, x=\infty$ are the inverse images of two smooth curves meeting transversally in exactly one point $P$
and which are part of the branch locus $\sB$. The covering is totally branched on $P$, hence these special fibres consist of two
smooth curves  meeting transversally in exactly one point $P'$. Both are $(\ZZ / n)$-cyclic covers of $\PP^1$ totally ramified in three points,
hence their genus equals $b$.

The other fibres are the inverse image of a $\PP^1$  intersecting  the branch locus transversally in 4 points, hence they are all smooth of genus $g=n-1$.

(4) There are several ways to prove that $b=q$, some more explicit,  the following one is in the spirit of this paper.

We want to calculate 
$$q : = h^1 (\hol_S) = h^1 (\hol_S (K_S))=  h^1 (\omega_{S|B} (K_B))$$ 
and we denote $\omega_{S|B} $  for short by $\omega$.

We use the Leray spectral sequence for $f$, saying that 
$$ q = h^0(B, \sR^1 f_* (\omega)(K_B) ) + h^1 ( f_* (\omega)(K_B) ) .$$

The first term, by relative duality, equals $b = h^0(B, \hol_B(K_B) )$, while the second vanishes, as $ f_* (\omega) = V = A \oplus Q$
by Fujita's second theorem. Then $h^1 ( V(K_B) )= h^0 (V^{\vee})$ by Serre duality, and $ h^0 (A^{\vee} )= 0$
since $A$ is ample, while   $ h^0 (Q^{\vee} )= 0$, else the monodromy of some summand of the unitary flat bundle $Q$
would have trivial monodromy, contradicting the irreducibility of the monodromy representation.

\end{proof}

\begin{prop}\label{p3}
The smooth surface $S$ is minimal of general type with $K_S$ ample, and with invariants
$$ e(S) = c_2(S) =  3 + 2(n-2)(n-3) = 2 n^2 - 10 n + 15;  $$
$$ K_S^2 =  5 (n-2)^2 . $$

They have positive index $\sigma(S)= \frac{1}{3} (K^2_S - 2 e(S)) > 0$  and indeed their slope $\frac{K^2_S}{  e(S)}   \geq 2,5$.

We have that the universal cover of $S$ is the unit ball in $\CC^2$ if and only if $n=5$, which corresponds to the case of
  three distinct surfaces $S', S'', S'''$.

\end{prop}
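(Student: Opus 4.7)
The plan is to compute $e(S)$ and $K_S^2$ by two independent methods---the fibration $f:S\to B$ and the abelian cover $\pi:S\to Z$---and then deduce the ampleness of $K_S$, the slope inequality, and the positivity of the index by straightforward algebraic manipulation; the ball--quotient statement for $n=5$ will follow from the equality case in the Bogomolov--Miyaoka--Yau inequality.

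For $e(S)$ I will apply the standard additivity formula to the fibration of Proposition~\ref{p2}. With $e(B) = 3-n$, $e(F) = 2-2g = 4-2n$, and with each of the three singular fibres contributing $e(F_i) = 2(2-2b)-1 = 5-2n$, one immediately gets $e(S) = (3-n)(4-2n) + 3 = 2n^2 - 10n + 15$. For $K_S^2$, a direct computation in $\Pic(Z)$ shows that the sum of the strict transforms of the three horizontal lines, three vertical lines, the diagonal, and the three exceptional divisors $E_i$ equals $4H_1 + 4H_2 - 2(E_0+E_1+E_2) = -2K_Z$, confirming that the branch divisor $\sB$ is the bianticanonical divisor of the Del Pezzo surface $Z$ of degree $5$. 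Since every component of $\sB$ has inertia of order exactly $n$ by Proposition~\ref{p1}, the canonical bundle formula for Galois covers gives
$$K_S \;=\; \pi^{*}\!\Bigl(K_Z + \tfrac{n-1}{n}\sB\Bigr) \;=\; \tfrac{n-2}{n}\,\pi^{*}(-K_Z),$$
so that $K_S^2 = \tfrac{(n-2)^2}{n^2}\cdot |G|\cdot K_Z^2 = 5(n-2)^2$. Because $-K_Z$ is ample on the Del Pezzo surface and $\pi$ is finite, $\pi^{*}(-K_Z)$ is ample, whence $K_S$ is ample and $S$ is minimal of general type. The numerical inequalities are then immediate from the formulas: $K_S^2 - 2e(S) = n^2 - 10 > 0$ for $n\geq 5$ gives the positivity of the index, while $2K_S^2 - 5e(S) = 10n - 35 \geq 0$ for $n\geq 5$ gives the slope bound $K_S^2/e(S)\geq 5/2$.

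For $n=5$ one obtains $K_S^2 = 45 = 3\cdot 15 = 3\,e(S)$, i.e.\ equality in the Bogomolov--Miyaoka--Yau inequality; by Yau's theorem this forces the universal cover of $S$ to be biholomorphic to the unit ball $\mathbb{B}^2\subset \CC^2$, and $\pi_1(S)$ to be a torsion-free cocompact lattice in $\mathrm{PU}(2,1)$. The one point where some bookkeeping is genuinely required, and which I expect to be the main obstacle, is the assertion of \emph{exactly three} such surfaces in the $n=5$ case: this will be done by enumerating the admissible exponent tuples $(m_0,\ldots,m_3;n_0,n_1,n_2)$ with $n=5$ modulo the $(\ZZ/5)^{*}$-action of changing the generator of $G$, and matching the resulting lattices with the three of \cite{bc}; the stated ranks of $A$ and $Q$ then follow from applying the dimension formula~\eqref{eq2} to the six non-trivial characters of the fibrewise $\ZZ/5$-action.
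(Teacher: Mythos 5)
Your proposal is correct and follows essentially the same route as the paper: the Euler number via the fibration additivity (the paper phrases it as the Zeuthen--Segre formula, with the three nodal fibres each contributing $+1$), the identification $\sB\equiv -2K_Z$ on the degree-$5$ Del Pezzo surface giving $K_S\equiv \frac{n-2}{n}\pi^*(-K_Z)$ and hence $K_S^2=5(n-2)^2$ together with ampleness, the same elementary algebra for the index and slope, Yau's theorem (with $(n-5)^2>0$ ruling out $n>5$) for the ball-quotient characterization, and the same enumeration of exponent tuples modulo $(\ZZ/5)^*$ yielding the three surfaces of \cite{bc}. The only part you leave as a plan rather than execute is that final enumeration, which in the paper amounts to noting that $1+1+1+2$ and $1+2+2$ are the unique partitions and distinguishing the cases $m_3\neq m_i$ versus $m_3=m_0=m_1$ (with subcases $n_0=n_1$ or $n_0\neq n_1$).
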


\begin{proof}

The calculation for the topological Euler-Poincar\'e characteristic $e(S)$ follows from the Zeuthen Segre formula
asserting that $e(S)$ equals the sum of the product $ e(B) e(F) = 4 (b-1)(g-1)$ with the number $\mu$ of singular 
fibres counted with multiplicity: here therefore $\mu = 3$ and we get
$ e(S) =  3 + 2(n-2)(n-3) = 2 n^2 - 10 n + 15  $.

To calculate $K_S^2$, we observe that $K_S$ is numerically equivalent to the pull back of $K_Z + \frac{n-1}{n} \sB$.

Since $\sB \equiv 4 L_1 + 4 L_2 - 2 \Sigma_i E_i$, where $L_1, L_2$ are the total transforms of the two rulings of 
$\PP^1 \times \PP^1$,  and $ K_Z =  -2  L_1 -2  L_2 + \Sigma_i E_i$, we obtain that $\sB \equiv - 2 K_Z$,
hence $K_S$ is numerically equivalent to the pull back of $K_Z + \frac{n-1}{n} \sB =  -  \frac{n-2}{n} K_Z$.

Since $-K_Z$ is ample, and $ K_Z^2 = 5$, we easily obtain that $S$ has has ample canonical divisor $K_S$, and 

$$K_S^2 =   5 (n-2)^2 .$$

The surface $S$ is minimal since $K_S$ is ample.

We now calculate the slope as 
$$\frac{5 (n-2)^2}{ 3 + 2(n-2)(n-3)} = \frac{5}{2} \frac{n-2}{n-3 +  \frac{3}{2(n-2)}}  > \frac{5}{2}.$$

The same formula shows that the slope is a strictly decreasing function of $n$, tending to $\frac{5}{2}$ as $ n \ra \infty$,
and beginning with slope $= 3$ for $n=5$. But, by the theorem of Yau, slope equal to $3$ is equivalent to having 
the ball as universal cover.

Consider now the case $n=5$: the 4-tuple of residue classes modulo $5$ is equivalent, modulo simultaneous multiplication by a unit,
to $ 1 + 1 + 1 + 2 = 5$, and this is the only representation via integer rests which add up to $5$. Also the $n_i$ are
uniquely determined as $ 1 + 2 + 2 = 5$.

There are two different cases: $m_3 \neq m_i$, or (up to renumbering) $m_3=m_0= m_1$; in this second case  there are 
two subcases, according to $n_0 = n_1 $ or $n_0 \neq n_1$.

\end{proof}

\begin{rem}
The above three surfaces which occur for $n=5$ have already been constructed in \cite{bc}.

Being ball quotients, they are rigid.

 In joint work of the first author together with Ingrid Bauer it was recently shown that the above surfaces $S$ for $ n \geq 5$ are rigid.

Another interesting question is whether the surfaces $S$ are always $ K (\pi,1)$'s, i.e., whether their
universal covering is always contractible.

\end{rem}

 Recall now  the following algebraic formula for the Euler number, the so-called Zeuthen-Segre formula (see the  lecture notes \cite{cb}).

\begin{definition}
Let $ f : S \ra B$ be a fibration of a smooth algebraic surface $S$ onto a curve of genus $b$, and consider a fibre 
$F_t  = \sum n_i C_i$, where the $C_i$ are irreducible curves.

Then the divisorial singular locus of the fibre is defined as the divisorial part of the critical scheme,
$ D_t : =  \sum (n_i -1) C_i$, and the Segre number of the fibre is
defined as 
$$\mu_t : = deg \sF + D_t K_S  - D_t^2,$$
where the sheaf  $\sF$  is concentrated in the singular points of the reduction of the
fibre, and is the quotient of $\hol_S$ by  the ideal sheaf  generated by the components of the vector 
$d \tau / s$, where $ s = 0 $ is the equation of $D_t$, and where $\tau$ is the pull-back of a 
local parameter
at the point $t \in B$.

More concretely, $$ \tau  = \Pi_j   f_j^{n_j} , s = \tau / ( \Pi_j   f_j),$$ and the logarithmic derivative yields

$$ d \tau = s [  \sum_j n_j (df_j \Pi_{h \neq j} f_h].$$

\end{definition} 

The following is the refined Zeuthen-Segre formula
\begin{theo}
Let $ f : S \ra B$ be a fibration of a smooth algebraic surface $S$ onto a curve of genus $b$, and with fibres of genus $g$.

Then $$c_2(S) = 4 (g-1)(b-1) + \mu,$$ where $\mu = \sum_{t\in B} \mu_t$,
and $\mu_t \geq 0$ is defined as above. Moreover, $\mu_t$ is strictly positive, except if the fibre is smooth or a multiple of a smooth
curve of genus $g=1$. 

\end{theo}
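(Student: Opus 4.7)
The plan is to compute $c_2(S) = e(S)$ in two ways---topologically, via the stratification of $B$ induced by $f$, and algebraically, via adjunction on $S$ combined with the Koszul-type sheaf $\sF$---and then to match the two expressions locally at each singular fibre.

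On the topological side, let $T \subset B$ denote the finite set of critical values of $f$ and set $B_0 := B \setminus T$. The restriction $f^{-1}(B_0) \to B_0$ is a $C^\infty$-fibre bundle with fibre a smooth curve of genus $g$, so multiplicativity of the Euler characteristic gives $e(f^{-1}(B_0)) = (2 - 2b - |T|)(2-2g)$. Adding the Euler characteristics of the topological singular fibres (which equal $e(F_t^{red})$, since the underlying space ignores multiplicities) yields
\[ e(S) = 4(g-1)(b-1) + \sum_{t \in T} \bigl( e(F_t^{red}) - (2-2g) \bigr). \]
Thus it suffices to prove, for each $t \in T$, the local identity $\mu_t^{top} := e(F_t^{red}) - (2-2g) = \deg \sF + D_t K_S - D_t^2$.

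For this identification I would write $F_t^{red} = \sum_i C_i$ and compute $e(F_t^{red})$ via the normalisations $\tilde C_i \to C_i$, expressing the answer in terms of the geometric genera $g(\tilde C_i)$ and the local topological invariants at the singular points of the reduced fibre. Adjunction on $S$ converts each $g(\tilde C_i)$ into intersection-theoretic data involving $C_i K_S$, $C_i^2$ and the $\delta$-invariants of $C_i$. Using $F_t = F_t^{red} + D_t$, $F_t^2 = 0$ and $F_t \cdot K_S = 2g-2$ (adjunction for a general smooth fibre), an algebraic rearrangement reduces the desired identity to the local claim that $\deg \sF$ equals precisely the total singularity contribution appearing on the topological side. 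Verifying this local claim is the technical core of the argument: at a singular point of $F_t^{red}$, the components of $d\tau/s = \sum_j n_j\, df_j \prod_{h \ne j} f_h$ cut out the Jacobian-type ideal whose colength matches the Milnor/$\delta$-contribution to $e(F_t^{red})$, and checking this case by case (transverse crossings, higher-order tangencies, etc.) is the main obstacle.

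For positivity, $\deg \sF \geq 0$ automatically since $\sF$ has finite length, and the combination $D_t K_S - D_t^2$ can be analysed via Zariski's lemma on fibres (giving $D_t^2 \leq 0$, since the intersection form on the components of $F_t$ is negative semidefinite with kernel generated by $F_t$) together with adjunction applied to the components of $D_t$. Alternatively, once the identity $\mu_t = e(F_t^{red}) - e(F)$ is established, positivity follows from the general topological fact that smoothing a singular fibre can only decrease the Euler characteristic. For the equality case, $\mu_t = 0$ forces $\deg \sF = 0$, hence $F_t^{red}$ smooth, which by connectedness of $F_t$ means $F_t = nC$ for a single smooth curve $C$; then $C^2 = 0$ and $\mu_t = (n-1)(2g(C) - 2)$, vanishing exactly when $n=1$ (smooth fibre) or $g(C) = 1$ (multiple of an elliptic curve), as claimed.
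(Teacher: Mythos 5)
The paper itself does not prove this theorem: it is recalled from the lecture notes \cite{cb}, so there is no internal argument to compare with, and your proposal has to stand on its own. Its overall strategy is the standard one (compute $e(S)$ by stratifying $B$ over the critical values, so that $e(S)-4(g-1)(b-1)=\sum_t\bigl(e(F_t^{red})-(2-2g)\bigr)$, then match each local defect with the algebraic expression $\deg \sF + D_tK_S-D_t^2$), and that first, topological half is fine. But the second half is exactly where the content of the refined Zeuthen--Segre formula lies, and you do not prove it: the identity $e(F_t^{red})-(2-2g)=\deg\sF+D_tK_S-D_t^2$ is announced as ``the technical core'' and then deferred (``checking this case by case \dots is the main obstacle''). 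Nothing in the proposal actually relates the colength of the ideal generated by the components of $d\tau/s$ to the $\delta$- or Milnor-type invariants of the reduced fibre, nor handles the interaction between the multiplicities $n_i$ and the singular points of $F_t^{red}$ (note that $\sF$ is supported only at the singular points of the \emph{reduction}, while $D_t$ carries all the multiplicity information); this is precisely the computation that makes the theorem nontrivial, so as written the proof has a genuine gap rather than a routine omission.

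The positivity discussion is also not self-contained. You assert that $\mu_t=0$ forces $\deg\sF=0$, which requires knowing separately that $D_tK_S-D_t^2\ge 0$; your appeal to Zariski's lemma only gives $D_t^2\le 0$, and $D_tK_S$ can be negative (fibres may contain $(-1)$-curves of multiplicity $>1$), so this inequality needs an argument. The alternative route you mention --- that ``smoothing a singular fibre can only decrease the Euler characteristic,'' i.e. $e(F_t^{red})\ge e(F)$ with equality only for smooth fibres or multiples of smooth elliptic curves --- is true, but it is itself a theorem whose proof is of the same nature as the missing local identity, so invoking it does not close the circle; it presupposes the comparison you are trying to establish. (Your equality-case computation $\mu_t=(n-1)(2g(C)-2)$ for $F_t=nC$ with $C$ smooth is correct, and the potentially negative case $g(C)=0$, $n>1$ cannot occur since it would force the general fibre to have negative genus --- a point worth stating explicitly.)
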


\begin{prop}
Let $S$ be one of the surfaces considered in this section. Then any surface $X$ which is homeomorphic to $S$
has Albanese map which is a fibration onto a curve $B$ of the same genus $b= 1/2(n-1)$ as the Albanese image of $S$. If moreover $X$
is diffeomorphic to $S$,  the Albanese fibres have the same genus $g = 2b= n-1$
and, if the number of singular points on the fibres is finite, there are only three singularity on the fibres, counted with multiplicity.
In particular, there are at most three singular fibres.
\end{prop}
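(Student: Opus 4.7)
The plan is to exploit two layers of topological invariance: (a) the cup product on $H^1$ detects the irrational pencil and fixes the genus of its base, so that the Albanese map of any $X$ homeomorphic to $S$ lands on a curve of genus $b$; (b) under an oriented diffeomorphism, Seiberg--Witten theory pins down the canonical class up to sign, which forces the fibre genus to agree, and then the refined Zeuthen--Segre formula bounds the number of singular fibres. Throughout we assume $X$ is a smooth projective (hence K\"ahler) surface homeomorphic, resp.\ diffeomorphic, to $S$.

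\textbf{Step 1: cup product rank and the Albanese fibration of $X$.} Since $b_1(X)=b_1(S)=2b$ and $X$ is K\"ahler, $q(X)=b$. For $S$ the Albanese fibration $\alpha_S:S\to B$ has $g(B)=b=q(S)$, so every holomorphic one-form pulls back from $B$; consequently
$$\cup_S:\bigwedge\nolimits^2 H^1(S,\QQ)\To H^2(S,\QQ)$$
factors through $H^2(B,\QQ)\cong\QQ$ and has rank exactly one, with image $\QQ\cdot[F_S]$. Cup product being a topological invariant, $\cup_X$ has rank one as well. Decomposing $\cup_X\otimes\CC$ by Hodge type, let $s$ denote the rank of $H^{1,0}\wedge H^{1,0}\to H^{2,0}$ and $r$ that of $H^{1,0}\otimes H^{0,1}\to H^{1,1}$; conjugation between the $(2,0)$ and $(0,2)$ parts yields total rank $r+2s=1$, hence $s=0$. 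So $H^{1,0}(X)\wedge H^{1,0}(X)=0$ in $H^{2,0}(X)$, and since $\dim H^{1,0}(X)=b\ge 2$, Castelnuovo--de Franchis applied to the maximal isotropic subspace $H^{1,0}(X)$ produces a surjective morphism $f:X\to B'$ with connected fibres (Stein factorisation) onto a smooth curve $B'$ of genus $b$ through which every holomorphic one-form factors. Since $q(X)=g(B')$, this $f$ is the Albanese map of $X$.

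\textbf{Step 2: fibre genus and count of singular fibres under diffeomorphism.} If $X$ is orientation-preserving diffeomorphic to $S$ it is again minimal of general type, and by Friedman--Morgan/Brussee the class $\pm K$ is an oriented-diffeomorphism invariant for such surfaces, so the induced isomorphism $\phi^*:H^2(S,\QQ)\to H^2(X,\QQ)$ satisfies $\phi^*K_S=\pm K_X$. Since the fibre class $[F]$ is cohomologically determined as the primitive generator of the image of cup product on $H^1$, also $\phi^*[F_S]=\pm[F_X]$. Adjunction $K\cdot F=2g-2$ then gives
$$2g_X-2=K_X\cdot[F_X]=\pm K_S\cdot[F_S]=\pm(2g_S-2),$$
and positivity forces $g_X=g_S=n-1$. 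Applying the refined Zeuthen--Segre formula to $f$ and to $\alpha_S$,
$$c_2(X)=4(b-1)(g-1)+\mu_X,\qquad c_2(S)=4(b-1)(g-1)+3,$$
homeomorphism invariance of $c_2$ yields $\mu_X=3$, and since $g\ge 4$ each singular fibre contributes at least one to $\mu_X$, so $f$ has at most three singular fibres.

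\textbf{Main obstacle.} The genuinely deep input is the identification $\phi^*K_S=\pm K_X$ in Step 2, which rests on the fact that $\pm K$ is the unique Seiberg--Witten basic class of a minimal surface of general type; the rest is Hodge-theoretic bookkeeping plus a direct application of Castelnuovo--de Franchis. A secondary technicality is the identification of the fibre class with the primitive generator of the (one-dimensional) image of cup product: this follows, once $f$ is known to be the Albanese map, from the fact that $f^*:H^2(B',\QQ)\to H^2(X,\QQ)$ is injective with image $\QQ\cdot[F_X]$, so that any class in the rank-one image of $\cup_X$ is a rational multiple of $[F_X]$.
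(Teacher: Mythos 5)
Your architecture is essentially the paper's: for the first two assertions the paper simply quotes Theorem A of \cite{isogenous}, whose proof is exactly the combination you spell out (rank of the cup product on $H^1$ plus Castelnuovo--de Franchis for the topological invariance of the base genus; Seiberg--Witten invariance of $\pm K$ for the differentiable invariance of the fibre genus), and the final count is the same refined Zeuthen--Segre computation $c_2=4(b-1)(g-1)+\mu$, giving $\mu=3$. Your way of deducing ``at most three singular fibres'' is in fact slightly cleaner than the paper's: since $g=n-1\geq 4$ excludes multiples of smooth elliptic curves, every singular fibre has $\mu_t\geq 1$, so you avoid the paper's separate discussion (via Zariski's lemma) of non-reduced fibres.

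There is, however, one genuine gap, in Step 2: the identification $\phi^*[F_S]=\pm[F_X]$. What you actually establish (and what your ``main obstacle'' paragraph re-asserts) is only that the image of the cup product on $H^1(\cdot,\QQ)$ is the line $\QQ\cdot[F]$; this gives $\phi^*[F_S]=\lambda[F_X]$ with $\lambda\in\QQ^*$ unknown, and adjunction then yields $2g_X-2=\pm\lambda(2g_S-2)$, which does not determine $g_X$. Calling $[F]$ ``the primitive generator of the image of cup product'' presupposes either that $[F]$ is primitive in $H^2(\cdot,\ZZ)$ (neither proved nor needed) or that the image of the integral cup product $\bigwedge^2H^1(\cdot,\ZZ)\to H^2(\cdot,\ZZ)$ is exactly $\ZZ\cdot[F]$, and the latter is the statement that must be proved. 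It does hold here: since $q=b$ and the Albanese fibration has connected fibres, $f_*\colon H_1(X,\ZZ)\to H_1(B,\ZZ)$ is surjective, and every integral class in $H^1(X,\ZZ)$, being rationally a pull-back, vanishes on $\ker f_*$ and is therefore an integral pull-back; hence $f^*\colon H^1(B,\ZZ)\to H^1(X,\ZZ)$ is an isomorphism, and the image of the integral cup product equals $f^*H^2(B,\ZZ)=\ZZ[F]$ because the symplectic pairing on $H^1(B,\ZZ)$ is unimodular. With this lemma on both sides, a homeomorphism carries $[F_S]$ to $\pm[F_X]$ and your adjunction argument closes; without it the fibre-genus claim is not established. (A minor further point: the Seiberg--Witten input $\phi^*K_S=\pm K_X$ requires the diffeomorphism to be orientation preserving, an hypothesis the statement does not make and which you should either justify or flag, as the cited Theorem A implicitly does.)
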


\begin{proof}
The first two statements follow directly from \cite{isogenous}, theorem A.

For the last statement we invoke the above refined Zeuthen-Segre formula
$$ e(X) =  4(b-1)(g-1) + \mu.$$ 

 Since $b,g$ are the same for $S$ and $X$, it follows that $\mu=3$, which shows the third assertion.

The refined version of the Zeuthen-Segre formula implies in particular  that, if $D$ is the divisorial part of the critical locus,
then $3= \mu \geq D K_X - D^2$, where $D K_X - D^2 = 2 p(D) -2 - 2 D^2$ is a positive even number.

Each non reduced  fibre $F_t = \Sigma_i n_i C_i$ gives a contribution $D_t : =  \Sigma_i (n_i -1) C_i$ to $D$, and   Zariski's lemma 
says that, if $D_t \neq 0$, then $D_t^2 < 0$ unless $F_t$ is a multiple fibre. 

If we had a multiple fibre $F_t = mC$, then we would have $D_t K_X - D_t^2 = D_t K_X = (m-1)/m F K_X=  (2g-2)(m-1)/m \geq (g-1) \geq 4$,
which is a contradiction. Hence there are no multiple fibres.

Assume that $F_t$ is a non reduced fibre, so that $D_t \neq 0$: then  $D_t K_X - D_t^2 = 2$, since it is a strictly positive  even integer which is not greater than $3$.

So,  if there are infinitely many singular points on the fibres, then there is exactly one non reduced fibre and at most one more singular point;
in particular, there 
are at most two singular fibres.

\end{proof}

We can summarize our main result in the following theorem,
for which we give two proofs, one self-contained and based on Fujita's second theorem, the other based on the theory
of hypergeometric integrals.

  \begin{theo}\label{surfaces}
  There exists an infinite series of surfaces with ample canonical bundle,  
  whose Albanese map is a  fibration $ f : S \ra B$  onto a curve $B$ of genus $b= 1/2(n-1)$,  with fibres of genus $g = 2b= n-1$; 
  here $n \geq 5$ can be any integer relatively prime with $6$ and  $f$ is as in Prop.~\ref{p2}.
  
  These Albanese  fibrations  yield negative answers to Fujita's question about the semiampleness of $V : = f_* \om_{S|B}$, since
  here $V : = f_* \om_{S|B}$
splits as a direct sum 
$ V = A  \oplus Q$, where $A$ is an ample    vector bundle, and $Q$ is a  unitary  flat  bundle with
infinite monodromy group.

The fibration $f$ is semistable: indeed all the fibres are smooth, with the exception of three fibres which are the union of two smooth curves of genus $b$
which meet transversally in one point.

For $n=5$ we get   three surfaces which are rigid, and are quotient of the unit ball in $\CC^2$ by a torsion free cocompact lattice $\Ga$.

The rank of $A$, respectively $Q$ is in this case equal to $2$.
 \end{theo}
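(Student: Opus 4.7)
The plan is to combine Propositions~\ref{p1}--\ref{p3} with Fujita's second theorem (\ref{fuj2}) and the monodromy analysis of Section~\ref{sectioncyclic}. Propositions~\ref{p1}--\ref{p3} already supply essentially all the geometric content of the statement: smoothness and minimality of $S$, ampleness of $K_S$, identification of $f$ with the Albanese map, $g(B)=(n-1)/2$, fibres of genus $n-1$ smooth away from three transverse unions of two smooth genus-$b$ curves, the slope bound forcing the three $n=5$ surfaces to be ball quotients, and rigidity in that case. Theorem~\ref{fuj2} then provides at once the splitting $V = A\oplus Q$ with $A$ ample and $Q$ unitary flat, so the remaining task is to exhibit a nonzero direct summand of $Q$ of rank two and to prove that its monodromy is infinite; Theorem~\ref{semiample} will then give non-semiampleness of $V$.

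For this I would exploit the $(\ZZ/n)^2$-action on $S$ over $B$: since the Fujita decomposition is canonical it is preserved by the group action, so both $A$ and $Q$ inherit the character decomposition and one obtains $V = \bigoplus_\chi V_\chi$. Identifying the generic fibre $F$ with the cyclic $\ZZ/n$-cover of $\PP^1$ branched along $\{0,1,x,\infty\}$ with weights $(1,1,1,n-3)$, the summand $V_{\chi_j}$ corresponding to the character $\chi_j$ of the first $\ZZ/n$-factor is naturally identified with the pullback to $B$ of the Hodge subbundle $F^1\hat{\HH}_j$ from Section~\ref{sectioncyclic}. By the signature formula~\eqref{eq52}, $\mathrm{sign}(H_j)\in\{(0,2),(1,1),(2,0)\}$, and when it is $(2,0)$ the Hodge filtration $F^1$ equals the whole rank-2 flat bundle, so $V_{\chi_j}$ itself is a unitary flat rank-2 direct summand that sits inside $Q$. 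For $j=n-1$ one has $[m_i(n-1)]=n-m_i$, hence $\sum_i[m_i(n-1)]=4n-n=3n$, giving signature $(2,0)$ uniformly in $n$; this produces the required rank-2 flat summand, and for $n=5$ a direct count of the three nonzero characters yields $\rk Q = 2 = \rk A$.

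Finally, Corollary~\ref{corinfinite} produces some $j_0\in(\ZZ/n)^*$ such that $\hat{\HH}_{j_0}$ has infinite monodromy. By Lemma~\ref{lemmafinite} the finiteness of the monodromy depends only on the orbit of $j$ under the Galois action of $\Gal(\QQ(\zeta_n)/\QQ)=(\ZZ/n)^*$ on characters, and this orbit of any $j\in (\ZZ/n)^*$ is all of $(\ZZ/n)^*$; hence $\hat{\HH}_{n-1}$ also has infinite monodromy. The monodromy of its pullback to $B$ is a finite-index subgroup of the monodromy of $\hat{\HH}_{n-1}$ on $\PP^1\setminus\{0,1,\infty\}$ and so is again infinite. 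I expect the main obstacle to be the clean identification of $V_{\chi_j}$ on $B$ with the pullback of $F^1\hat{\HH}_j$: this requires that $B\to \PP^1$ be ramified exactly at the three images of singular fibres of $f$, so that Proposition~\ref{sstablered} applies cleanly and no rank is absorbed into the ample part when passing to the semistable model --- which is precisely what the semistability provided by Proposition~\ref{p2}(3) guarantees.
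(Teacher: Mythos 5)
Your proposal is correct and follows essentially the paper's own ``Second Proof'': Propositions~\ref{p1}--\ref{p3} and Theorem~\ref{fuj2} give the geometry and the splitting, the character decomposition together with the signature formula~\eqref{eq52} identifies $V_{-1}=\sH_{-1}$ as the rank-2 unitary flat summand, and infiniteness of its monodromy is transferred from the unit $j$ supplied by Corollary~\ref{corinfinite} to $j=-1$ by Galois conjugation over $\QQ(\zeta_n)$, exactly as in the paper. (The paper also records a more self-contained ``First Proof'' avoiding Corollary~\ref{corinfinite}, via a degree computation producing an ample rank-1 eigen-summand, an irreducibility lemma, and reduction to $n$ prime; note too that your $n=5$ rank count implicitly needs $\deg V_2,\deg V_3>0$, which follows from irreducibility of $\sH_2,\sH_3$ (Prop.~\ref{propapp}) -- a point the paper likewise leaves implicit.)
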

 
 \begin{proof} 
 By Propositions \ref{p1},  \ref{p2}, \ref{p3}, the only assertion which needs to be shown is that, if we considering the splitting of
   $V : = f_* \om_{S|B}$
as a direct sum 
$ V = A  \oplus Q$, where $A$ is an ample    vector bundle, and $Q$ is a  unitary  flat  bundle, then  $Q$ has 
infinite monodromy group.

We observe that the group $G = \ZZ/n$ acts on the fibration, thus we have a splitting according to the characters of $G$,
$ j \in \ZZ/n$,  $$V = \oplus_{ j  \in \ZZ/n}  V_j $$.

The fact that all the fibres are smooth and that the only singular fibres  are two smooth curves intersecting transversally in one point
shows that the vanishing cycles are homologically trivial. Hence the local monodromies in cohomology are trivial, thus we have 
a flat vector bundle $\sH : =   R^1f_* (\CC)$, which is a holomorphic flat bundle having $V$ as a holomorphic subbundle.

Similarly we have a splitting 

$$ \sH = \oplus_{ j  \in \ZZ/n}  \sH_j ,$$
where the flat bundles $\sH_j$ have all rank $2$ for $j\neq 0$, as observed in section three. 

Moreover, we have the following direct sum of complex vector bundles   $\sH_j = V_j   \oplus \overline{ V_{-j}}$,
and there are a priori several  possible  cases:

\begin{itemize}
\item
 $\sH_j = V_j  $ and $V_{-j} = 0$, hence $V_j$ is a flat holomorphic bundle; in this case the bundle $\sH_j $ carries a flat Hermitian form which is positive definite;
 \item
  $\sH_j = \overline{ V_{-j}}$ and $V_j = 0$, hence $V_{-j}$ is a flat holomorphic bundle; in this case the bundle $\sH_j $ carries a flat Hermitian form which is negative definite;
  \item
$\sH_j = V_j   \oplus \overline{ V_{-j}}$, both summands have rank 1, and here  the bundle $\sH_j $ carries a flat Hermitian form which is indefinite.
This case could a priori bifurcate in the cases $V_j  $ is flat, or $V_j $ is ample (i.e., it has strictly positive degree). 

\end{itemize}

{\bf First Proof:}

Step 1: $V$ is not flat.

In fact, otherwise  (see for instance  theorem 4 of \cite{cd}) 
$$ 0 = 12 \ deg (V) = K_S^2 - 8 (g-1)(b-1) ; $$
hence $ K_S^2 = 8 (g-1)(b-1) = 2 e(S) - 6$, contradicting Proposition \ref{p3}.

Step 2: hence $V = \oplus_j V_j $ admits an ample rank 1 summand $V_j = A_j$.

Step 3: It suffices to prove the theorem in  the case where $n$ is prime.

In fact, if $k$ divides $n$, $ n = hk$, we have an anologous fibration  $f_k : S_k \ra B_k$ for the surface $S_k$ obtained by taking the associated $(\ZZ/k)^2$ covering.

Pulling back the fibration to $B$, under $ \psi : B \ra B_k$, we obtain a surface $S'  = S / G'$, where $G' = \ZZ/ h$; and the fibration $f$ factors through
$ f' : S' \ra B$. Hence $V'  =  \psi^* (V_k)$ is a direct summand of $V$ and we are done, since $V'$ has a unitary flat summand  $Q'$ with infinite monodromy.

Step 4: There is an eigenbundle $\sH_j$ with infinite monodromy.

This follows from Step 2 and the following lemma.
\begin{lemma}
If $V_j = A_j$ is an ample rank 1 summand, then $\sH_j$ is irreducible and with infinite monodromy.

\end{lemma}

\begin{proof}{\bf (of the Lemma).}
We first show that the rank two flat vector bundle is irreducible. Otherwise there would be an exact sequence of flat vector bundles
$$0 \ra \sH' \ra \sH_j \ra \sH'' \ra 0 $$
where both $\sH' , \  \sH''  $ have rank 1.

Since $\sH_j = V_j   \oplus \overline{ V_{-j}}$, we get a nontrivial homomorphism $V_j \ra \sH_j$
which realizes $V_j$ as a holomorphic subbundle. Composing with the above surjection $\sH_j \ra \sH'' $
we get a holomorphic homomorphism $V_j \ra \sH''$, which must be zero since the target has degree zero, while
$V_j = A_j $ has positive degree. We deduce a nontrivial holomorphic homomorphism $V_j \ra \sH'$, which must be zero
by the same argument, and we have found a contradiction to the fact that $V_j \ra \sH_j$ is injective.
\end{proof}

Step 4. Observe preliminarily that our surfaces, the Albanese map $f$ and  and all the bundles $V$, $Q$, are defined over $\ZZ$.

Now,  by construction we have  a flat rank $2$ summand $V_{-1} = \sH_{-1}$  (since $m_0 + m_1 + m_2 + m_3 = n$). Hence, when  $n$ is prime,
$\sH_{-1}$ and $\sH_j$ are Galois conjugate. The condition that the monodromy is infinite is obviously  invariant under Galois conjugation
(since a finite group of matrices  transforms to a finite group under a field automorphism).

Hence also $V_{-1}  = \sH_{-1} $  has infinite monodromy, and it is a direct summand of $Q$ with infinite monodromy.

\smallskip

{\bf Second Proof:}

By corollary \ref{corinfinite} there is $j \in (\ZZ/n)^*$ such that $\hat{\HH}_j$ carries a monodromy invariant indefinite Hermitian form $H_j$, 
and is irreducible with infinite monodromy.

Therefore also $\sH_j$ has infinite monodromy. Since $j$ is a unit, it follows that $\sH_{-1}$ and $\sH_j$ are Galois conjugate. 
Hence $V_{-1}  = \sH_{-1} $  has also  infinite monodromy, and the same holds for   $Q$, of which  $V_{-1} $ is a direct summand. 
 \end{proof}
 
 \begin{rem}
 
 In the standard case $V$  has a lot of flat summands.
 
  In fact,  $V_j = 0 $ for $j \leq \frac{n}{3}$ (since $3 j \leq n$ implies $ j + j + j + [j(n-3)] < 2n \Rightarrow
 j + j + j + [j(n-3)] = n$);
 hence $V_{-j}  $  i flat for $j \leq \frac{n}{3}$.
 
 On the other hand, for n=11 we can take $m_0 = 1, m_1 = 2, m_2 = 3, m_3 = 5$ and then only $V_{10}$ is a flat summand,
 of course with infinite monodromy.
 \end{rem}

\section{General observations and relation with Shimura curves}

Consider our surfaces $ S \ra B$ as yielding a curve inside the compactified moduli space of curves of genus $ g = n-1$. 
The image of $B$ inside $\overline{\frak M_g}$ intersects the boundary only in points belonging to the divisor $\Delta_{g/2,g/2}$.

Moreover, under the Torelli map $\frak M_g \ra \frak A_g$, the image does not go to the boundary, since the singular fibres have
compact Jacobian.

For $n=5$ we obtain a rigid curve inside  $\overline{\frak M_g}$, a phenomenon which is not new: compare the examples provided by double Kodaira fibrations
(\cite{cat-rollenske}).

Now, $B$ parametrizes all the curves with an action of $\ZZ/n$ whose quotient is $\PP^1$, and with branch locus $\sS$  consisting of 4 points: because
all deformations preserving the symmetry come from $H^1(C, \Theta_C)^G$ which is isomorphic to  $H^1(\PP^1, \Theta(- \sS))$, the space of
logarithmic deformations of the pair consisting of $\PP^1$ and the 4 points on it. 

$B$ parametrizes, via the Torelli
map, also principally polarized Abelian surfaces with such a symmetry. 

The question is whether the symmetry-preserving deformations of these Abelian varieties are just the ones parametrized by $B$.

 The main point is that (see \cite{pavia} and \cite{pavia2}, especially for more details concerning  the relation with Shimura curves) the dual of $H^1(C, \Theta_C)^G$ equals $H^0(2 K_C)^G$, while the tangent space to the 
symmetry preserving deformations of the Abelian varieties  
is given by 
$$ Sym^2 ( H^0(K_C))^G =  Sym^2 ( \bigoplus_j V_j)^G = \bigoplus_{j \leq n/2} (V_j \otimes V_{-j} ).$$

Observe that $V_0 = 0$, while, for a character $j$, writing as usual  $\mu_{i,j} = \frac{1}{n}[m_i j]$,  the condition $\sum_i  \mu_{i,j} = 2$ is equivalent
to dim ($V_j \otimes V_{-j}$ )=1, else one has dim ($V_j \otimes V_{-j}$ )=0.

In other words, the number of parameters for the symmetry-preserving deformations of these Abelian varieties is just the number of rank 2
ample bundles in the direct image sheaf $f_* (\omega)$. 

If there is only one such ample summand, then this means that we have a Shimura curve in $\frak A_g$.
This situation leads to a finite number of cases, which were classified  by Moonen in \cite{moonen} (see \cite{pavia2} for groups more general than cyclic groups).

Interest in these Shimura curves is due to a conjecture by Oort that there should not be such curves as soon as $g$ is bigger than 7,
see \cite{lu-zuo} and references therein for results in this direction.

\section{Appendix}
\begin{prop}\label{propapp} Let $m_0,m_1,m_2,m_3, n\in \ZZ$ with $ 0 < m_k \leq n-3\, (0\leq k\leq 3)$ and $m_0 + m_1 + m_2 + m_3= n.$ 
For $j\in 1,\ldots,n-1,$ let $\hat{\HH}_j$ be the local system as in Section~\ref{sectioncyclic}. 
Assume additionally
that each of the numbers $m_0,\ldots,m_3$ is coprime to $n$ (resp., assume that  that $j$ is coprime to $n$ with no further assumption on $m_0,\ldots, m_3$).   
Then the local systems  $\hat{\HH}_j$ are irreducible for $j=1,\ldots,n-1$ (resp., for $j$ prime to $n$).
\end{prop}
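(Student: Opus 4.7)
The plan is to identify $\hat{\HH}_j$ with the Gauss hypergeometric local system $_2\!F_1(a,b;c)$ on $\PP^1\setminus\{0,1,\infty\}$, for parameters $(a,b,c)$ determined by the fractional exponents $\mu_{i,j}=[m_ij]/n$, and then to invoke the classical Kummer criterion for irreducibility.

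First I would make the identification precise. By formula \eqref{eq2} together with the standing hypothesis $1\leq m_k\leq n-3$, the fibre $\hat{\HH}_j|_x=H^1(\PP^1\setminus\{0,1,x,\infty\},\LL_j)$ has rank exactly $2$. The Picard--Fuchs operator governing the variation of its Euler--Pochhammer periods
\[
\int y^{\mu_{1,j}-1}(1-y)^{\mu_{2,j}-1}(1-xy)^{\mu_{3,j}-1}\,dy
\]
is the Gauss hypergeometric operator with parameters (up to integer shifts, which do not affect the exponential monodromies)
\[
a=1-\mu_{3,j},\qquad b=\mu_{1,j},\qquad c=\mu_{1,j}+\mu_{2,j},
\]
so that $\hat{\HH}_j$ is isomorphic to $_2\!F_1(a,b;c)$ as a local system.

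Next I would apply Kummer's criterion: the rank-two hypergeometric local system $_2\!F_1(a,b;c)$ is irreducible if and only if none of $a$, $b$, $c-a$, $c-b$ lies in $\ZZ$. Substituting the parameters above and using that $\sum_{i=0}^3\mu_{i,j}\in\ZZ$ (a consequence of $\sum m_i=n$), these four non-integrality conditions translate precisely into
\[
[m_i j]\not\equiv 0\pmod n\quad\text{for each }i\in\{0,1,2,3\}.
\]
It remains to verify these in both cases of the proposition. If every $m_k$ is coprime to $n$, then $[m_kj]\equiv 0\pmod n$ would force $n\mid j$, contradicting $j\in\{1,\ldots,n-1\}$. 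If instead $\gcd(j,n)=1$, then $[m_kj]\equiv 0\pmod n$ would force $n\mid m_k$, contradicting $0<m_k\leq n-3$.

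The hard part will be Step~1, namely pinning down the identification with the hypergeometric operator and carefully tracking the integer-shift ambiguity in $(a,b,c)$, which is irrelevant for the irreducibility test but must be controlled to be sure we are applying the right criterion. A self-contained alternative, which might in fact be cleaner, is to compute the local monodromies of $\hat{\HH}_j$ at $x=0,1,\infty$ directly via Picard--Lefschetz (each corresponds to a collision of the moving puncture $s_3=x$ with one of the three fixed punctures $s_0,s_1,s_2$) and then to invoke the elementary fact that a rank-$2$ representation of the free group on two generators is reducible if and only if the three local monodromy matrices admit a common eigenvector; one checks that this fails exactly under the four non-vanishing conditions above.
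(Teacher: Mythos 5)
Your proposal is correct and follows essentially the same route as the paper: identify the periods of $\hat{\HH}_j$ with Euler-type integrals, recognize the Gauss hypergeometric operator (the paper does this via Wolfart's formula), and apply the standard irreducibility criterion that none of $a$, $b$, $c-a$, $c-b$ is an integer, which reduces modulo $\ZZ$ (using $\sum_i\mu_{i,j}\in\ZZ$) to $[m_ij]\not\equiv 0\pmod n$ for all $i$, verified exactly as you do in the two cases. The integer-shift ambiguity you flag is handled in the paper by tracking the shifts $s,t,u$ explicitly, but as you note it is immaterial for the criterion.
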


\begin{proof}  By construction, the local sections of $\hat{\HH}_{-j}$ are variations in $x$ of periods on  the desingularizations of the curves 
$$z_1^n = y_0^{m_0} y_1^{m_1} (y_1 - y_0 )^{m_2}  (y_1 - x y_0 )^{m_3} , \ \  x \in \CC \setminus \{0,1  \}$$
of the form (given on the the affine part belonging to $y_1=1$) 
$$ \int_\gamma \frac{y_0^s(1-y_0)^t(1-xy_0)^u }{z_1^j}dy_0,$$  where $s,t,u$ are integers, cf.~\cite{Wolfart}, Section~2.
It is convenient to introduce integers $A,B,C$ and rational numbers $a,b,c$ by the following conditions:
$$ A=(1-b)n=m_0,\, B=(b+1-c)n=m_2,\, C=an=m_3,\,  n-A-B-C=m_1.$$
Therefore
$$ a=\frac{m_3}{n},\quad b=1-\frac{m_0}{n},\quad c=2-\frac{m_0}{n}-\frac{m_2}{n}.$$
If $\gamma$ denotes integration from $0$ to $1$ then  the above integral can be expressed as a hypergeometric function as follows:
$$ \int_0^1 \frac{y_0^s(1-y_0)^t(1-xy_0)^u }{z_1^j}dy_0 =D\cdot {}_2F_1(ja-u,jb-j+1+s,jc-2j+2+t+s;x)$$
where $D$ is a constant in $\CC$,
cf.~\cite{Wolfart}, Formula~(7). Hence, in order to show that $\hat{\HH}_j$ is irreducible, it suffices to show that 
the hypergeometric differential 
equation belonging to ${}_2F_1(ja-u,jb-j+1+s,jc-2j+2+t+s;x)$ is irreducible. This is the case if and only if 
the values $ja-u,\,jb-j+1+s$ and the differences $(jc-2j+2+t+s)-(ja-u), \,(jc-2j+2+t+s)-(jb-j+1+s)$ are not contained in $\ZZ,$
cf.~\cite{Beukers_Lectures}, Cor.~3.10. Obviously, the latter 
condition holds if and only if  the values 
$ja=\frac{jm_3}{n},\,jb=-\frac{jm_0}{n}+j$
and 
$$ ja-jc=\frac{jm_3}{n}+\frac{jm_0}{n}+\frac{jm_2}{n}-2j=-\frac{jm_1}{n}-j$$ as well as 
$$ jb-jc=j-\frac{jm_0}{n}+\frac{jm_0}{n}+\frac{jm_2}{n}-2j=\frac{jm_2}{n}-j$$ are not contained in $\ZZ.$ This holds by 
our assumptions.
\end{proof}

\bigskip
\section*{Acknowledgments}
We thank Osamu Fujino for asking us to produce  simple explicit semistable fibrations which are  counterexamples to Fujita's question.
We thank also  Stefan Reiter for his help for the proof of lemma \ref{lemmafinite}, Paola Frediani and Elisabetta Colombo for a useful conversation on Shimura curves
and 
Alessandro Ghigi for valuable comments which pushed us  to give more details in  the proof.

\end{document}